\numberwithin{equation}{section}
\newtheorem{theorem}{Theorem}[section]
\newtheorem{lemma}[theorem]{Lemma}
\theoremstyle{definition}
\newtheorem{definition}[theorem]{Definition}
\newtheorem{example}[theorem]{Example}
\newtheorem{proposition}[theorem]{Proposition}
\theoremstyle{remark}
\newtheorem{remark}[theorem]{Remark}
\numberwithin{equation}{section}
\begin{document}
	
	\title[Fractal dimension and Fractional calculus]{Vector-valued fractal functions: Fractal dimension and Fractional calculus}
	

	
	\author{Manuj Verma}
	\address{Department of Mathematics, IIT Delhi, New Delhi, India 110016}
	\email{mathmanuj@gmail.com}
	\author{Amit Priyadarshi}
	\address{Department of Mathematics, IIT Delhi, New Delhi, India 110016}
	
	\email{priyadarshi@maths.iitd.ac.in}
	\author{Saurabh Verma}
	\address{Department of Applied Sciences, IIIT Allahabad, Prayagraj, India 211015 }
	\email{saurabhverma@iiita.ac.in}
	
	
	\date{\today}
	\subjclass{Primary 28A80}

	\keywords{Iterated function systems, Fractal interpolation functions, Hausdorff dimension, Box dimension, Open set condition, Riemann-Liouville fractional integral}
	\begin{abstract}
		There are many research available on the study of real-valued fractal interpolation function  and fractal dimension of its graph. In this paper, our main focus is to study the dimensional results for  vector-valued fractal interpolation function and its Riemann-Liouville fractional integral. Here, we give some results which ensure that dimensional results for vector-valued functions are quite different from real-valued functions. We determine interesting bounds for the Hausdorff dimension of the graph of vector-valued fractal interpolation function. We also obtain bounds for the Hausdorff dimension of associated invariant measure supported on the graph of vector-valued fractal interpolation function. Next, we  discuss more efficient upper bound for the Hausdorff dimension of measure in terms of probability vector and contraction ratios. Furthermore, we determine some dimensional results for the graph of the Riemann-Liouville fractional integral of a vector-valued fractal interpolation function.
	\end{abstract}
	
	\maketitle

	
	
	\section{INTRODUCTION}

	In Fractal Geometry, calculation of fractal dimension is one of the major themes for researchers. There are a lot of research available on the study of fractal dimension of sets and the graph of continuous functions, see for more details \cite{MF2,Fal,SP,w&l}.
	In 1986, Barnsley \cite{MF1} established  the theory of real-valued fractal interpolation functions (FIFs) by using the concept of iterated function systems (IFSs) and estimated the Hausdorff dimension of affine FIFs. Also, there are some articles \cite{MF6,B&H}, in which researchers calculated the box dimension of affine FIFs.  In 1991, Massopust \cite{MP} introduced vector-valued fractal interpolation functions and computed the box dimension of the graph of vector-valued fractal interpolation functions. After that,  Hardin and Massopust \cite{HM} constructed fractal interpolation functions from $\mathbb{R}^n$ to $ \mathbb{R}^m$ and derived a formula for the box dimension of hidden variable FIFs. Recently,  Barnsley and Massopust \cite{MF3} introduced the construction of bi-linear FIFs and gave a formula for the box dimension of the graph of bi-linear FIFs. Also, there are some research available on the study of fractal surfaces, see \cite{M2,SS2} for more details.
	\par Fractional calculus is a very old area of research and it attracts researchers for its wide applications. There are many researchers, who investigated relationship between fractals and fractional calculus, see for more details \cite{Tat, R.R}. In 2009, Ruan et al. estimated the box dimension of the graph of Riemann-Liouville fractional integral of linear FIFs. Gowrishankar and Uthayakumar \cite{Gowri} also studied theory regarding  Riemann-Liouville fractional integral of countable linear FIFs. In 2010, Liang \cite{Liang1} showed that the box dimension of the  Riemann-Liouville fractional integral of a continuous function of bounded variation is 1. After that, in 2018, Liang \cite{Liang4} proved that the upper box dimension of the Riemann-Liouville fractional integral of a continuous function can not exceed the upper box dimension of this continuous function. Some authors \cite{Liang5,Liang EST,Z} estimated fractal dimension of the graph of the Riemann-Liouville fractional integral of functions like H\"older function and unbounded variation function. Abbas and Chandra \cite{SS2,SS} worked on mixed  Riemann-Liouville fractional integral of fractal surfaces. In \cite{MR1}, Roychowdhury and Selmi determined quantization dimension for the invariant measure of the recurrent IFS. Some recent works on dimension of self-similar measures can be seen in \cite{MHOCHMAN,PABLO} and the references cited therein.  However, we should emphasize on the fact that the present paper deals with the measures supported on the graphs of fractal functions, these measures do not belong to the class of self-similar (or self-affine) measures. 
	Here, firstly we convince the reader that dimensional results for vector-valued function and real-valued function are not the same, in general. In this paper, our main focus is to describe some dimensional results for the graph of vector-valued fractal interpolation functions and its Riemann-Liouville fractional integral.  

	\par
	The paper is organized as follows. In the forthcoming Section \ref{se2}, we give some definitions and required results for our work. Section \ref{se3} consists of some dimensional results on the vector-valued fractal function and associated measures. In this section, we first give some propositions and lemmas which describe relations between fractal dimension of vector-valued function and its components. After that, we establish bounds for the  Hausdorff dimension of vector-valued fractal interpolation function. We also obtain bounds for the Hausdorff dimension of associated invariant measures supported on the graph of vector-valued fractal interpolation function. Then, we determine more general upper bound for the Hausdorff dimension of invariant measure in terms of probability vector and contraction ratios. Later, we give some conditions under which vector-valued fractal interpolation function belongs to some special class of function spaces like H\"older space, bounded variation class and absolutely continuous function space. In Section \ref{sc-4}, we prove that the Riemann-Liouville fractional integral of a vector-valued fractal interpolation function is again a vector-valued fractal interpolation function corresponding to some data set. Next, we compute  the Hausdorff dimension and box-counting dimension of the graph of the Riemann-Liouville fractional integral of a vector-valued fractal interpolation function.
	
	\section{preliminaries}\label{se2}
	\begin{definition}
		Let $F$ be a subset of a metric space $(X,d)$. The Hausdorff dimension of $F$ is defined as follows
		$$ \dim_H{F}=\inf\{\beta>0: \text{for every}~\epsilon>0,~\text{there is a cover}~~ \{U_i\}_{i=1}^{\infty}~\text{of}~F~\text{ with}\sum_{i=1}^{\infty} |U_i|^\beta<\epsilon \},$$
		where $|U_i|$ denotes the diameter of $U_i.$  
		
	\end{definition}
	\begin{definition}
		The box dimension of a non-empty bounded subset $F$ of $(X,d)$ is defined as
		$$\dim_{B}F=\lim_{\delta \to 0}\frac{\log{N_{\delta}(F)}}{-\log\delta},$$
		where $N_{\delta}(F)$ denotes the smallest number of sets of diameter at most $\delta$ that can cover $F,$ provided the limit exists. 
		If this limit does not exist then the upper and the lower box dimension, respectively, are defined as 
		$$\overline{\dim}_{B}F=\limsup_{\delta \to 0}\frac{\log{N_{\delta}(F)}}{-\log\delta},$$
		$$\underline{\dim}_{B}F=\liminf_{\delta \to 0}\frac{\log{N_{\delta}(F)}}{-\log\delta}.$$
	\end{definition}
	\begin{definition} Let $F$ be a subset of a metric space $(X,d).$
		For $r> 0$ and $t\geq0$, let
		$P^t_r(F)=\sup\big \{\sum^{}_i|B_i|^t  \big\}$, where $\{B_i\}$ is a collection of disjoint balls of radii at most $r$ with centres in $F$.
		As $r$ decreases, $P^t_r(F)$ also decreases. Therefore, the limit
		$$P^t_0(F)= \lim_{r \to 0}P^t_r(F)$$
		exists. We define
		$$P^t(F)=\inf\bigg\{\sum_iP^t_0(F_i): F\subset\bigcup^{\infty}_{i=1}F_i\bigg\},$$
		and it is known as the $t$-dimensional packing measure. The Packing dimension is defined as follows:
		$$\dim_{P}(F)=\inf\{t\geq 0 : P^{t}(F)=0\}=\sup\{t\geq 0 : P^{t}(F)=\infty\}.$$
	\end{definition}
	\textbf{Note-} We denote the graph of a function $f$ by $G(f)$ throughout this paper.
	\begin{definition}
		Let $\mu$ be a Borel probability measure on $\mathbb{R}^n$, where $n\in\mathbb{N}.$ The Hausdorff dimension of measure $\mu$ is defined by 
		$$\dim_H(\mu)= \inf\{\dim_H(F): F ~\text{is a Borel subset such that } \mu(F)>0\}.$$
	\end{definition}
	\begin{definition}
		The lower and upper local dimensions of a measure $\mu$ at $x\in \mathbb{R}^n$, respectively, are given by
		$$\underline{\dim}_{loc}\mu(x)=\liminf_{r\to 0}\frac{\log\mu(B(x,r))}{\log r},~~~\overline{\dim}_{loc}\mu(x)=\limsup_{r\to 0}\frac{\log\mu(B(x,r))}{\log r}. $$
	\end{definition}
	\begin{proposition}\cite{Fal1}\label{loc}
		For a finite Borel measure $\mu$, we have
		$$\dim_H(\mu)\leq \inf\{s: \underline{\dim}_{loc}\mu(x)\leq s~~ \text{for}~~\mu\text{-almost all}~~x\in \mathbb{R}^n \}.$$
	\end{proposition}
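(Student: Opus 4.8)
The plan is to first reduce the proposition to a single quantitative lemma and then establish that lemma by a covering argument. Writing $s^{*}$ for the infimum on the right-hand side, it suffices to prove that whenever $\underline{\dim}_{loc}\mu(x)\le s$ for $\mu$-almost all $x$, one has $\dim_H(\mu)\le s$; the stated inequality then follows by taking the infimum over all admissible $s$. By the definition of $\dim_H(\mu)$ as an infimum of $\dim_H(F)$ over positive-measure Borel sets, I only need to exhibit, for each $\epsilon>0$, one Borel set $E$ with $\mu(E)>0$ and $\dim_H(E)\le s+\epsilon$. This is precisely the point at which the \emph{lower} local dimension is the right hypothesis: it guarantees, at $\mu$-almost every point, the existence of a sequence of radii tending to zero along which the ball carries comparatively large measure, and a sequence of such good radii is exactly what a fine (Vitali-type) cover requires.

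So, fix $\epsilon>0$ and set $E=\{x:\underline{\dim}_{loc}\mu(x)\le s\}$, a Borel set of full $\mu$-measure. I would first unwind the hypothesis: for $x\in E$ the condition $\liminf_{r\to0}\frac{\log\mu(B(x,r))}{\log r}\le s<s+\epsilon$ means there are arbitrarily small $r$ with $\frac{\log\mu(B(x,r))}{\log r}<s+\epsilon$, and since $\log r<0$ this rearranges (with care about the reversed inequality) to
$$\mu\big(B(x,r)\big)>r^{\,s+\epsilon}.$$
Hence, for every $\delta>0$, the family of balls $B(x,r)$ with $x\in E$, $r<\delta$, and $\mu(B(x,r))>r^{s+\epsilon}$ is a fine cover of $E$ by balls of uniformly bounded radii.

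Next I would apply the $5r$-covering lemma to extract a countable, pairwise disjoint subfamily $\{B(x_i,r_i)\}_i$ with $r_i<\delta$ such that $E\subseteq\bigcup_i B(x_i,5r_i)$. The covering sets have diameter at most $10\delta$, so for any $\beta>s+\epsilon$ the defining inequality together with the disjointness of the $B(x_i,r_i)$ gives
$$\sum_i\big|B(x_i,5r_i)\big|^{\beta}=10^{\beta}\sum_i r_i^{\beta}\le (10\delta)^{\beta-(s+\epsilon)}\,10^{\,s+\epsilon}\sum_i r_i^{\,s+\epsilon}<(10\delta)^{\beta-(s+\epsilon)}\,10^{\,s+\epsilon}\,\mu(\mathbb{R}^n).$$
Since $\beta-(s+\epsilon)>0$, the right-hand side tends to $0$ as $\delta\to0$, so for each target tolerance a small enough $\delta$ produces a cover $\{U_i\}$ of $E$ with $\sum_i|U_i|^{\beta}$ as small as we like. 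By the definition of the Hausdorff dimension this yields $\dim_H(E)\le\beta$ for every $\beta>s+\epsilon$, hence $\dim_H(E)\le s+\epsilon$. As $\mu(E)>0$, we conclude $\dim_H(\mu)\le s+\epsilon$, and letting $\epsilon\to0$ gives $\dim_H(\mu)\le s$.

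I expect the only genuinely delicate points to be organisational rather than deep: correctly passing from the $\liminf$ inequality to the pointwise ball estimate while tracking the sign change coming from $\log r<0$, and invoking the covering lemma in the right form (a disjoint subfamily whose $5$-fold dilations still cover $E$) so that disjointness can be traded against the total mass $\mu(\mathbb{R}^n)$. Once this mass-versus-radius bookkeeping is in place, the dimension estimate is immediate from the volume sum above.
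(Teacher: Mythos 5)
The paper states this proposition as a quoted result from Falconer's \emph{Techniques in Fractal Geometry} (\cite{Fal1}) and supplies no proof of its own, so there is no internal argument to compare against. Your proof --- upgrading the $\liminf$ hypothesis to the pointwise estimate $\mu(B(x,r))>r^{s+\epsilon}$ along a fine family of balls, extracting a disjoint subfamily by the $5r$-covering lemma, and trading disjointness against the finite total mass to kill the $\beta$-dimensional sum as $\delta\to0$ --- is correct and is essentially the standard argument given in that reference.
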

	\begin{theorem}\cite{Fal}
		Let $f: [0,1] \to \mathbb{R}$ be a H\"{o}lder continuous function with the H\"{o}lder exponent $ \sigma\in (0,1)$. Then   $\overline{\dim}_B(G(f))\leq 2-\sigma. $ 
		
	\end{theorem}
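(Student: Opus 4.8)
The plan is to estimate $N_\delta(G(f))$ directly from the H\"older bound and then feed the estimate into the definition of upper box dimension. Write $c>0$ for the H\"older constant, so that $|f(x)-f(y)|\le c|x-y|^\sigma$ for all $x,y\in[0,1]$. Fix a small $\delta>0$ and partition the interval $[0,1]$ into $m=\lceil 1/\delta\rceil$ subintervals $I_1,\dots,I_m$, each of length at most $\delta$. The first key step is to control the vertical spread of the graph over each $I_j$: if $x,y\in I_j$ then $|x-y|\le\delta$, so the H\"older estimate gives $|f(x)-f(y)|\le c\delta^\sigma$. Hence the oscillation of $f$ on $I_j$ is at most $c\delta^\sigma$, and the portion of $G(f)$ lying above $I_j$ is contained in a rectangle of horizontal width $\delta$ and vertical height $c\delta^\sigma$.

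The second step is the counting per column. Such a rectangle can be covered by squares of side $\delta$, stacked vertically, and the number needed is at most $\dfrac{c\delta^\sigma}{\delta}+1 = c\,\delta^{\sigma-1}+1$. Summing over the $m\le \delta^{-1}+1$ columns yields
\[
N_\delta(G(f))\;\le\;(\delta^{-1}+1)\bigl(c\,\delta^{\sigma-1}+1\bigr)\;\le\;C\,\delta^{\sigma-2}
\]
for a constant $C$ depending only on $c$, once $\delta$ is small enough (here one uses $\sigma\in(0,1)$ so that $\delta^{\sigma-1}\ge 1$ and the term $c\delta^{\sigma-1}$ dominates). Strictly speaking one should note that these $\delta$-squares have diameter $\sqrt2\,\delta$, not $\delta$; this only inflates the count by an absolute constant (or one works with squares of side $\delta/\sqrt2$), which does not affect the dimension, so I would remark on it briefly rather than belabor it.

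The final step is to substitute into the definition of $\overline{\dim}_B$. Taking logarithms,
\[
\frac{\log N_\delta(G(f))}{-\log\delta}\;\le\;\frac{\log C+(\sigma-2)\log\delta}{-\log\delta}\;=\;(2-\sigma)+\frac{\log C}{-\log\delta}.
\]
Letting $\delta\to 0^+$, the last term vanishes, and therefore $\overline{\dim}_B(G(f))=\limsup_{\delta\to0}\frac{\log N_\delta(G(f))}{-\log\delta}\le 2-\sigma$, as claimed.

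There is no serious conceptual obstacle here; the proof is essentially a bookkeeping argument. The only point that requires a little care is the per-column count $c\delta^{\sigma-1}+1$ and ensuring the constants are handled uniformly in $\delta$, together with the harmless passage between squares of side $\delta$ and covering sets of diameter $\delta$. The hypothesis $\sigma\in(0,1)$ enters precisely to guarantee $2-\sigma\in(1,2)$ and that the vertical stacking term controls the estimate as $\delta\to0$.
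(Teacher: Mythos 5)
Your argument is correct and is exactly the standard covering proof from Falconer's book that the paper cites for this statement (the paper gives no proof of its own, and it reuses the same column-counting estimate later when bounding $\overline{\dim}_B(G(h_i))$). Nothing further is needed.
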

	
	
	
	\subsection{Iterated Function Systems} Let $(X, d)$ be a complete metric space, and we denote the family of all nonempty compact subsets of $X$ by $H(X)$ . We define the Hausdorff metric
	$$\mathcal{D}(A,B) = \inf\{\delta>0 : A\subset B_\delta~~\text{and}~~ B \subset A_\delta \} ,$$
	where $A_\delta$ and $B_\delta$ denote the $\delta $-neighbourhoods of sets $A$ and $B$, respectively.
	Then it is well-known that $(H(X),\mathcal{D})$ is a complete metric space. A map $f: (X,d) \to (X,d) $  is called a contraction if there exists a constant  $r<1$ such that 
	$$d(f(x),f(y)) \le r d(x,y),~~\forall~~~ x , y \in X.$$
	\begin{definition}
		The system $\mathcal{I}=\big\{(X,d); f_1,f_2,\dots,f_N \big\}$ is called an iterated function system (IFS), if each $f_i$ is a contraction self-map on $X$ for $i\in \{1,2,\dots,N\}$.
	\end{definition}
	Let $\mathcal{I}=\big\{(X,d); f_1,f_2,\dots,f_N \big\}$ be an IFS. We define a mapping (widely known as the Hutchinson operator) $S$ from $H(X)$ into $H(X)$ given by
	$$ S(A) = \bigcup\limits_{i=1}^N f_i (A).$$
	The map $S$ defined above is a contraction
	map under the Hausdorff metric $\mathcal{D}$. If $(X,d)$ is a complete metric space, then, by the Banach contraction principle, there exists a unique $F\in H(X)$ such that $ F = \cup_{i=1}^N f_i (F)$, and it is called the attractor of the IFS. We refer the reader to see \cite{MF2,Fal}
	for details.
	\begin{definition}
		We say that an IFS $\mathcal{I}=\{(X,d);f_1,f_2,\dots,f_N\}$ satisfies the open set condition (OSC) if there is a non-empty open set $U$ with $f_i(U) \subset U~~\forall~i\in \{1,2,\cdots,N\}$  and  $ f_i(U)\cap f_j(U)= \emptyset $ for $i\ne j$. Moreover, if $U \cap F \ne \emptyset,$  where $F$ is the attractor of the IFS $\mathcal{I}$, then we say that  $\mathcal{I}$ satisfies the strong open set condition (SOSC). If $f_i(F)\cap f_j(F)=\emptyset$ for $i\ne j$, then we say that the IFS $\mathcal{I}$ satisfies the  strong separation condintion (SSC).  
	\end{definition}
	\subsection{Fractal Interpolation Functions}
	Consider a set of data points $ \{(x_i,\boldsymbol{y}_i)\in \mathbb{R}\times\mathbb{R}^M : i=1,2,\dots,N\} $ with $x_1<x_2<\dots <x_N$. Set $ T = \{1,2,...,N-1\}$ and   $J= [x_1, x_N] .$ For each $ k \in T,$ set $J_k= [x_k, x_{k+1}]$ and let $P_k: J \rightarrow J_k $ be a contractive homeomorphism  satisfying
	$$ P_k(x_1)=x_k,~~P_k(x_N)=x_{k+1}.$$
	For each $ k\in T $, let $F_k: J\times \mathbb{R}^M \rightarrow \mathbb{R}^M $ be a continuous  map such that 
	$$ |F_k(t,\boldsymbol{z_1}) - F_k(t,\boldsymbol{z_2})| \leq \tau_k |\boldsymbol{z_1}- \boldsymbol{z_2}| ,$$
	$$ F_k(x_1,\boldsymbol{y}_1)=\boldsymbol{y}_k, F_k(x_N,\boldsymbol{y}_N)=\boldsymbol{y}_{k+1},$$
	where $(t,\boldsymbol{z_1}), (t,\boldsymbol{z_2}) \in J\times \mathbb{R}^M $ and $ 0 \leq \tau_k < 1.$  In particular, we can take for each $k\in T$,
	$$P_k(t)=a_k t+ d_k, \quad F_k(t,\boldsymbol{z}) = \alpha_k \boldsymbol{z} + q_k (t).$$
	In the above expressions $a_k$ and $d_k$ are uniquely determined by the condition $ P_k(x_1)=x_k, P_k(x_N)=x_{k+1}.$ The multiplier $\alpha_k$ is called the scaling factor, which satisfies $-1< \alpha_k <1$ and $q_k:J\rightarrow \mathbb{R}^M$ is a continuous function such that $q_k(x_1)=\boldsymbol{y}_k-\alpha_k \boldsymbol{y}_1$ and $q_k(x_N)=\boldsymbol{y}_{k+1}-\alpha_k \boldsymbol{y}_N$.
	Now for each $k \in T $, we define function $W_k:J\times \mathbb{R}^M \rightarrow  J\times \mathbb{R}^M $ by  $$W_k(t,\boldsymbol{z})=\big(P_k(t),F_k(t,\boldsymbol{z})\big). $$
	Then the IFS $\mathcal{I}:=\{J\times \mathbb{R}^M ;W_1,W_2,\dots,W_{N-1}\}$ has a unique attractor \cite[Theorem 1]{MF1}, which is the graph of a function $h$ which satisfies
	the following functional equation reflecting self-referentiality:
	$$h(t)= \alpha_k h \big(P_k^{-1}(t) \big)+ q_k \big(P_k^{-1}(t)\big), t \in J_k, k \in T.$$
	The above function $h$ is known as the fractal interpolation function (FIF).
	\par   
	Let $\mathcal{I}:=\{J\times \mathbb{R}^M ;W_1,W_2,\dots,W_{N-1}\}$ be an iterated function system (IFS) with probability vector $(p_1,p_2,\cdots,p_{N-1}).$ Let $T=\{1,2,\cdots,N-1\}.$ Let $T^n$ be the set of all finite sequences of $T$ of length $n$,
	that is, $T^n=\{(i_1,i_2,\cdots,i_n): 1\leq i_j\leq N-1\}$ and let $T^*=\cup_{n\in \mathbb{N}}T^n$ denote the set of all sequences of $T$ of finite length. We denote by $\Omega$ the set of all infinite sequences   $\Omega=\{(\omega_1,\omega_2,\cdots): 1\leq \omega_j\leq N-1\}$. If $\bold{i}=(i_1,i_2,\cdots,i_n)\in T^*$, then the set $\{\omega\in \Omega: \omega_j=i_j~~ \forall~~ j\in\{1,2,\cdots,n\} \}$ is called a cylinder of length $n$ in $\Omega$ generated by $\bold{i}$  and is denoted by $[\bold{i}].$ For $\bold{i}=(i_1,i_2,\cdots,i_n)\in T^*$, we write $p_\bold{i}=p_{i_1}p_{i_2}\cdots p_{i_n}.$ Let $\mathcal{B}$ denote the Borel sigma-algebra on $\Omega$ generated by cylinders in $\Omega.$ We define a Borel probability measure $\mu$ on $\Omega$ by first defining on cylinders in $\Omega$ as follows:
	$$\mu([\bold{i}])=p_{\bold{i}}~~\forall~~ \bold{i}=(i_1,i_2,\cdots,i_n)\in T^* .$$
	Then, by the Caratheodory's extension theorem, $\mu$ can be extended to a unique Borel probability measure on $\Omega$ which we again denote by $\mu.$ It is clear that the support of $\mu$ is $\Omega.$

	\section{Dimensional results on  the vector-valued fractal function and associated measures}\label{se3}
	In the following lemma, we provide a relationship between the Hausdorff dimension of vector-valued continuous function and the Hausdorff dimension of its components. 
	\begin{lemma}\label{new222}
		Let $f: [a,b] \rightarrow \mathbb{R}^M$ be a continuous function and $f_i:[a,b] \to \mathbb{R}$ be the $i$th component of $f$, that is,  $f=(f_1,f_2,\cdots,f_M) $. Then we have
		\begin{itemize}
			\item[(1)] \label{t1} $\dim_H (\text{G}(f))\geq \max\limits_{1\leq i\leq M}\{\dim_H (\text{G}(f_i)) \}.$ 
			\item[(2)] $\dim_H (\text{G}(f)) = \dim_H( \text{G}(f_i))$,  provided the component function  $f_j$ is Lipschitz for each $1\leq j\ne i\leq M.$
		\end{itemize}
		
	\end{lemma}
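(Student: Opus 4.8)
The plan is to exploit the standard fact that Lipschitz maps do not increase Hausdorff dimension, and to realize $G(f_i)$ and $G(f)$ as Lipschitz images of one another under the appropriate hypotheses.

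For part (1), I would introduce the coordinate projection $\pi_i : \mathbb{R}^{M+1} \to \mathbb{R}^2$ given by $\pi_i(t, z_1, \dots, z_M) = (t, z_i)$. Since discarding coordinates can only decrease Euclidean distance, $\pi_i$ is nonexpansive ($1$-Lipschitz), and clearly $\pi_i(G(f)) = G(f_i)$ because $\pi_i\big(t, f_1(t), \dots, f_M(t)\big) = (t, f_i(t))$. Invoking the monotonicity of Hausdorff dimension under Lipschitz maps then gives $\dim_H(G(f_i)) = \dim_H(\pi_i(G(f))) \le \dim_H(G(f))$. As $i$ is arbitrary, taking the maximum over $1 \le i \le M$ yields the claim.

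For part (2), the inequality $\dim_H(G(f)) \ge \dim_H(G(f_i))$ is already furnished by part (1), so the entire content is the reverse inequality $\dim_H(G(f)) \le \dim_H(G(f_i))$. Here I would construct the ``lift'' map $\phi : G(f_i) \to G(f)$ reconstructing the missing coordinates, namely $\phi(t, f_i(t)) = (t, f_1(t), \dots, f_M(t))$. This is well defined because the first coordinate $t$ already determines the point of $G(f_i)$, and it is surjective onto $G(f)$ by construction. The key computation is that $\phi$ is Lipschitz: for $s, t \in [a,b]$ one expands $|\phi(s, f_i(s)) - \phi(t, f_i(t))|^2 = |s-t|^2 + \sum_{j=1}^M |f_j(s) - f_j(t)|^2$, and bounds each term $|f_j(s) - f_j(t)| \le L_j |s-t|$ for $j \ne i$ using the Lipschitz hypothesis on the other components. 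This leads to $|\phi(s,f_i(s)) - \phi(t,f_i(t))| \le \sqrt{1 + \sum_{j \ne i} L_j^2}\,\, |(s,f_i(s)) - (t,f_i(t))|$. Since $\phi$ is Lipschitz and surjective, $G(f) = \phi(G(f_i))$, and monotonicity of Hausdorff dimension gives $\dim_H(G(f)) \le \dim_H(G(f_i))$, completing the equality.

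The numerical Lipschitz estimate is routine; the one genuine point to get right is that it is precisely the Lipschitz control on the \emph{other} components $f_j$ ($j \ne i$) that forces the variation of $\phi$ to be dominated by the first-coordinate variation $|s-t|$, which is itself dominated by the base distance in $G(f_i)$. Without this hypothesis the lift need not be Lipschitz and only the one-sided bound of part (1) survives, in keeping with the paper's theme that a vector-valued graph may have strictly larger dimension than any single component.
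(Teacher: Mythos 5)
Your proposal is correct and follows essentially the same route as the paper: the paper's map $\Phi(x,f(x))=(x,f_i(x))$ is exactly your projection $\pi_i$ restricted to $G(f)$, and your Lipschitz lift $\phi$ is precisely $\Phi^{-1}$, so your two one-sided Lipschitz estimates together amount to the paper's claim that $\Phi$ is bi-Lipschitz. The only difference is that you spell out the Lipschitz constant $\sqrt{1+\sum_{j\ne i}L_j^2}$ explicitly, which the paper leaves as ``one can easily show.''
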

	\begin{proof}
		\begin{itemize}
			\item[(1)]  Let us define a mapping $\Phi : G(f) \to G(f_i)$ as follows
			$$\Phi(x,f(x) )=(x,f_i(x)).$$
			One can easily prove that $\Phi$ is a Lipschitz map.
			Now, the Lipschitz invariance property of the Hausdorff dimension (Cf. \cite[Corollary 2.4(a)]{Fal}) yields $$\dim_H (\text{G}(f))\geq \dim_H (\text{G}(f_i)).$$
			Since the above inequality holds for any $i\in \{1,2,\cdots,M\}$, we get 
			$$\dim_H (\text{G}(f))\geq \max\limits_{1\leq i\leq M}\{\dim_H (\text{G}(f_i)) \},$$
			completing the proof of item (1).
			\item[(2)]  In this part, we continue our proof with the same mapping $\Phi : G(f) \to G(f_i),$ defined by 
			$$\Phi(x,f(x) )=(x,f_i(x)).$$ 
			By using $f_j$ is Lipschitz for each $1\leq j\ne i\leq M,$ one can easily show that the map $\Phi$ is a  bi-Lipschitz map.
			In the light of the bi-Lipschitz invariance property of the Hausdorff dimension (Cf. \cite[Corollary 2.4(b)]{Fal}), we get
			$$\dim_H(\text{G}(f))= \dim_H (\text{G}(f_i)),$$
			which completes the proof.
			
		\end{itemize}
		
	\end{proof}
	
	Now, we describe some results similar to the above in terms of other dimensions.
	\begin{proposition}
		Let $f: [a,b] \rightarrow \mathbb{R}^M$ be a continuous function and $f_i:[a,b] \to \mathbb{R}$ be the $i$th component of $f$, that is, $f=(f_1,f_2,\cdots,f_M) $. Then we have
		$$\dim_P (\text{G}(f))\geq \max\limits_{1\leq i\leq M}\{\dim_P (\text{G}(f_i)) \},$$     $$\overline{\dim}_B (\text{G}(f))\geq \max\limits_{1\leq i\leq M}\{\overline{\dim}_B (\text{G}(f_i)) \},$$                         $$\underline{\dim}_B (\text{G}(f))\geq \max\limits_{1\leq i\leq M}\{\underline{\dim}_B (\text{G}(f_i)) \}.$$
	\end{proposition}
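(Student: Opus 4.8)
The plan is to mimic exactly the strategy used in Lemma \ref{new222}(1), since the only property of the Hausdorff dimension that was used there is its monotonicity under Lipschitz maps, and the packing dimension together with the upper and lower box dimensions all share this same Lipschitz-invariance property. First I would fix an index $i \in \{1,2,\dots,M\}$ and reuse the projection map $\Phi : G(f) \to G(f_i)$ defined by $\Phi(x,f(x)) = (x,f_i(x))$, which was already shown to be Lipschitz in the proof of Lemma \ref{new222}. The Lipschitz constant depends only on $f_i$ through the elementary estimate
$$|\Phi(x,f(x)) - \Phi(x',f(x'))|^2 = |x-x'|^2 + |f_i(x)-f_i(x')|^2 \leq |x-x'|^2 + |f(x)-f(x')|^2,$$
so $\Phi$ is in fact $1$-Lipschitz, and this single map serves all three dimensions simultaneously.

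Next I would invoke the Lipschitz monotonicity of each dimension in turn. For the packing dimension I would cite the standard fact (Cf. \cite[Corollary 3.17 and the remarks following it]{Fal}, or the general principle that packing dimension does not increase under Lipschitz maps) to conclude $\dim_P(G(f)) \geq \dim_P(G(f_i))$. For the upper and lower box dimensions the relevant statement is that if $\Phi$ is Lipschitz with constant $c$, then a cover of $G(f)$ by $N_\delta(G(f))$ sets of diameter at most $\delta$ maps to a cover of $\Phi(G(f)) = G(f_i)$ by the same number of sets, each of diameter at most $c\delta$; hence $N_{c\delta}(G(f_i)) \leq N_\delta(G(f))$. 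Taking $\limsup$ and $\liminf$ as $\delta \to 0$ of the relevant quotients then yields
$$\overline{\dim}_B(G(f)) \geq \overline{\dim}_B(G(f_i)), \qquad \underline{\dim}_B(G(f)) \geq \underline{\dim}_B(G(f_i)),$$
where the constant $c$ disappears in the limit because $\log(c\delta)/\log\delta \to 1$.

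Finally, since each of these three inequalities holds for every $i \in \{1,2,\dots,M\}$, I would take the maximum over $i$ on the right-hand side to obtain the three displayed inequalities in the statement, completing the proof. I do not anticipate a genuine obstacle here: the argument is entirely parallel to Lemma \ref{new222}(1), and the only point requiring a little care is making sure the cited Lipschitz-invariance results are stated in the correct direction (upper bound on the image dimension in terms of the domain dimension) for each of the three dimensions. The lower box dimension is occasionally the subtlest of the three because it is not as well behaved under general set operations, but under a single Lipschitz map the covering-number inequality above is completely elementary and causes no trouble.
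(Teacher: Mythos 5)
Your proposal is correct and follows essentially the same route as the paper, which simply states that the proof is analogous to part (1) of Lemma \ref{new222} (the same projection $\Phi(x,f(x))=(x,f_i(x))$ combined with the fact that packing and box dimensions do not increase under Lipschitz maps). Your explicit covering-number estimate $N_{c\delta}(G(f_i))\leq N_\delta(G(f))$ correctly fills in the detail the paper leaves implicit.
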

	\begin{proof}
		The proof is similar to part (1) of Lemma \ref{t1}. Hence, we omit it.
	\end{proof}
	
	\begin{proposition}\label{prop3.3} Let $f: [a,b] \rightarrow \mathbb{R}^M$ be a continuous function and $f_i:[a,b] \to \mathbb{R}$ be the $i$th component of $f$, that is, $f=(f_1,f_2,\cdots,f_M) $. If $f_j$ is a Lipschitz function for each $1\leq j\ne i\leq M$, then we have
		$$\dim_P(\text{G}(f))= \dim_P (\text{G}(f_i)),~~\overline{\dim}_B(\text{G}(f))= \overline{\dim}_B (\text{G}(f_i)),$$ and 
		$$\underline{\dim}_B(\text{G}(f))= \underline{\dim}_B (\text{G}(f_i)).$$
	\end{proposition}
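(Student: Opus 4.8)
The plan is to reuse the auxiliary map $\Phi\colon G(f)\to G(f_i)$ introduced in the proof of Lemma \ref{new222}(2), namely $\Phi(x,f(x))=(x,f_i(x))$. Under the standing hypothesis that each $f_j$ with $j\neq i$ is Lipschitz, that proof already shows $\Phi$ to be a bijection which is bi-Lipschitz. The entire argument then rests on the fact that the packing dimension and the upper and lower box dimensions are invariant under bi-Lipschitz maps, in exact analogy with the bi-Lipschitz invariance of the Hausdorff dimension used in Lemma \ref{new222}.

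First I would record the two-sided Lipschitz estimate for $\Phi$ explicitly, writing $L_j$ for a Lipschitz constant of $f_j$ when $j\neq i$. One direction is immediate: discarding the coordinates $f_j$ with $j\neq i$ can only decrease the Euclidean distance, so $|\Phi(u)-\Phi(v)|\leq|u-v|$ for all $u,v\in G(f)$. For the reverse inequality one estimates $\sum_{j\neq i}|f_j(x)-f_j(y)|^2\leq\big(\sum_{j\neq i}L_j^2\big)|x-y|^2$ and absorbs this bound into the retained coordinates $x$ and $f_i(x)$, obtaining $|u-v|\leq C\,|\Phi(u)-\Phi(v)|$ with $C=\big(1+\sum_{j\neq i}L_j^2\big)^{1/2}$. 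Hence both $\Phi$ and $\Phi^{-1}$ are Lipschitz.

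Next I would apply bi-Lipschitz invariance to each of the three dimensions in turn. Since $f$ is continuous on the compact interval $[a,b]$, the graphs $G(f)$ and $G(f_i)$ are compact, hence bounded, so the box dimensions are well defined; the standard results that bi-Lipschitz maps preserve the upper and lower box dimensions and the packing dimension (Cf. \cite{Fal}) then apply verbatim to $\Phi$. Invoking these three times yields the three claimed equalities in the statement.

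I do not expect a serious obstacle here: all the content is already contained in the bi-Lipschitz property of $\Phi$, which was extracted in the proof of Lemma \ref{new222}(2), combined with the invariance theorems for the box and packing dimensions. The only points requiring momentary care are confirming boundedness of the graphs so that the box dimensions are meaningful, and making sure to quote the invariance statements for the box and packing dimensions rather than only the Hausdorff version cited earlier.
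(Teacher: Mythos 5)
Your proposal is correct and follows essentially the same route as the paper, which simply remarks that the proof is analogous to part (2) of Lemma \ref{new222}: one uses the same map $\Phi(x,f(x))=(x,f_i(x))$, verifies it is bi-Lipschitz from the Lipschitz hypothesis on the $f_j$ with $j\ne i$, and invokes bi-Lipschitz invariance of the packing and upper/lower box dimensions. Your explicit two-sided Lipschitz estimate is a useful fleshing-out of the detail the paper omits, but it is not a different argument.
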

	
	\begin{proof}
		The proof is similar to part (2) of Lemma \ref{t1}, hence omitted.  
	\end{proof}
	\begin{lemma}
		Let $f: [a,b] \rightarrow \mathbb{R}^M$ be a continuous function and $f_i:[a,b] \to \mathbb{R}$ be the $i$th component of $f$, that is, $f=(f_1,f_2,\cdots,f_M) .$ If $f_j$ is a Lipschitz function on $[a,b]$ for all $1\leq j\ne i\leq M$, then
		$$\dim_H (\text{G}(f)) =\dim_H \bigg(\text{G}\bigg(\sum\limits_{j=1}^{M}f_j\bigg)\bigg)= \dim_H( \text{G}(f_i)),$$
		$$\overline{\dim}_B (\text{G}(f)) =\overline{\dim}_B \bigg(\text{G}\bigg(\sum\limits_{j=1}^{M}f_j\bigg)\bigg)= \overline{\dim}_B( \text{G}(f_i)),$$
		$$\underline{\dim}_B (\text{G}(f)) =\underline{\dim}_B \bigg(\text{G}\bigg(\sum\limits_{j=1}^{M}f_j\bigg)\bigg)= \underline{\dim}_B( \text{G}(f_i)),$$
		$$\dim_P (\text{G}(f)) =\dim_P \bigg(\text{G}\bigg(\sum\limits_{j=1}^{M}f_j\bigg)\bigg)= \dim_P( \text{G}(f_i)).$$
	\end{lemma}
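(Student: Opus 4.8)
The plan is to split each of the four displayed chains into its right equality and its left equality. Writing $\dim_\ast$ for any one of the four dimensions ($\dim_H$, $\overline{\dim}_B$, $\underline{\dim}_B$, $\dim_P$), the right equality $\dim_\ast(\text{G}(f)) = \dim_\ast(\text{G}(f_i))$ is already in hand: it is precisely part (2) of Lemma \ref{t1} in the Hausdorff case and Proposition \ref{prop3.3} in the packing and box-counting cases. Hence the only new content is the left equality $\dim_\ast(\text{G}(\sum_{j=1}^M f_j)) = \dim_\ast(\text{G}(f_i))$, and I would obtain all four instances of it at once.

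To this end I would set $g := \sum_{1\le j\ne i\le M} f_j$, so that $\sum_{j=1}^M f_j = f_i + g$. Being a finite sum of Lipschitz functions, $g$ is Lipschitz on $[a,b]$, say with constant $L$. The key device is the shear map $\Psi:\mathbb{R}^2\to\mathbb{R}^2$, $\Psi(x,y)=(x,\,y+g(x))$, with inverse $\Psi^{-1}(x,y)=(x,\,y-g(x))$. Since $\Psi(x,f_i(x))=(x,f_i(x)+g(x))=(x,\sum_{j=1}^M f_j(x))$, the map $\Psi$ carries $\text{G}(f_i)$ bijectively onto $\text{G}(\sum_{j=1}^M f_j)$.

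The main step is to verify that $\Psi$ is bi-Lipschitz. Using $|g(x_1)-g(x_2)|\le L|x_1-x_2|$ together with the triangle inequality, one gets a two-sided comparison of $|\Psi(x_1,y_1)-\Psi(x_2,y_2)|$ with $|(x_1,y_1)-(x_2,y_2)|$, and applying the same estimate to $\Psi^{-1}$ closes the argument; this is the only place where the Lipschitz hypothesis on the $f_j$ with $j\ne i$ is used. Once $\Psi$ is known to be bi-Lipschitz, the bi-Lipschitz invariance of the Hausdorff dimension (\cite[Corollary 2.4(b)]{Fal}) and the corresponding invariance of the upper and lower box dimensions and of the packing dimension give $\dim_\ast(\text{G}(\sum_{j=1}^M f_j))=\dim_\ast(\text{G}(f_i))$ for each of the four dimensions. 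Combined with the right equality above, this yields all four chains simultaneously. I do not expect a genuine obstacle here: the statement is a clean consequence of bi-Lipschitz invariance, the only routine verification being that a shear by a Lipschitz function is bi-Lipschitz.
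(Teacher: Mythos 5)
Your proposal is correct and follows essentially the same route as the paper: the paper likewise reduces the $\text{G}(f)$ equalities to part (2) of Lemma \ref{new222} and Proposition \ref{prop3.3}, and handles $\text{G}\big(\sum_{j=1}^M f_j\big)$ by exhibiting a bi-Lipschitz map onto $\text{G}(f_i)$ (your global shear $\Psi$ is just the inverse of the paper's map $\Phi(x,\sum_j f_j(x))=(x,f_i(x))$ restricted to the graph). The verification that the shear by the Lipschitz function $g=\sum_{j\ne i}f_j$ is bi-Lipschitz is exactly the routine estimate the paper leaves implicit, so there is no gap.
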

	\begin{proof}
		We define a mapping $\Phi :\text{G}\bigg(\sum\limits_{j=1}^{M}f_j\bigg)\to G(f_i) $ as follows 
		$$\Phi\bigg(x,\bigg(\sum\limits_{j=1}^{M}f_j(x)\bigg)\bigg)=(x,f_i(x)).$$
		By using simple property of norm and using the condition that the function $f_j$ is  Lipschitz on $[a,b]$ for all $1\leq j\ne i\leq M$, it is easy to show that $\Phi$ is a bi-Lipschitz map.
		In the light of Lemma \ref{new222}, Proposition \ref{prop3.3} and by using the bi-Lipschitz invariance property of Hausdorff dimension, lower box dimension, upper box dimension, and Packing dimension, we get our required result.
	\end{proof} 
	\begin{remark}\label{new9987}
		Note that in \cite{Kono} the Peano space filling  curve ${\Theta}:[0,1] \to [0,1] \times[0,1]$ is a H\"older continuous function with exponent $\frac{1}{2}$. Let $\Theta_i$ be the $i$th component of $\Theta$ for $i\in \{1,2\}$. The component function $\Theta_i$ satisfy $\dim_H\big(G({\Theta_i})\big) = 1.5 $ for all $i\in \{1,2\}$. However, we have ${\dim}_H\big({G}(\Theta)\big) \geq 2.$ From this, it is clear that the upper bound of the Hausdorff dimension of the graph of a vector-valued function cannot be written in terms of its H\"{o}lder exponent as we do for a real-valued function. 
		\par
		From the above discussion, we conclude that the dimensional results for vector-valued functions can be different from that of real-valued functions.
	\end{remark}
	In the rest of the section, we follow the notations introduced in Section \ref{se2}. Motivated by the work of Barnsley and Massopust \cite{MF3} on real-valued fractal functions, we define a metric $d_*$ on the space $ J\times \mathbb{R}^M$ as follows
	$$d_*((t_1,\boldsymbol{z_1}),(t_2,\boldsymbol{z_2}))= |t_1-t_2|+\|(\boldsymbol{z_1}-h(t_1))-(\boldsymbol{z_2}-h(t_2))\|~~~~~~~\forall~~(t_1,\boldsymbol{z_1}),(t_2,\boldsymbol{z_2})\in J\times \mathbb{R}^M.$$
	Then, it is easy to check that $\big( J\times \mathbb{R}^M,d_* \big)$ is a complete metric space. Further, it can also be seen that the above metric $d_*$ is equivalent to the Euclidean metric on $ J\times \mathbb{R}^M.$
	\par
	In the next theorem, we prove that $W_k$ is a contraction map for each $k\in T.$
	\begin{theorem}\label{W_K}
		Let $W_k(t,\boldsymbol{z})=(P_k(t),F_k(t,\boldsymbol{z}))$, where $P_k$ and $F_k$ are defined as in Section \ref{se2}. Then the map $W_k: J\times \mathbb{R}^M \to J\times \mathbb{R}^M $ is a contraction map on the complete metric space $\big( J\times \mathbb{R}^M,d_* \big).$
	\end{theorem}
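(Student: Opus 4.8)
The plan is to compute $d_*\big(W_k(t_1,\boldsymbol{z_1}),W_k(t_2,\boldsymbol{z_2})\big)$ directly from the definitions and to exhibit an explicit contraction ratio that is strictly less than one. Writing $W_k(t_i,\boldsymbol{z_i})=(P_k(t_i),F_k(t_i,\boldsymbol{z_i}))$, the definition of $d_*$ gives
$$d_*\big(W_k(t_1,\boldsymbol{z_1}),W_k(t_2,\boldsymbol{z_2})\big)=|P_k(t_1)-P_k(t_2)|+\big\|\big(F_k(t_1,\boldsymbol{z_1})-h(P_k(t_1))\big)-\big(F_k(t_2,\boldsymbol{z_2})-h(P_k(t_2))\big)\big\|.$$
The first summand is immediate: since $P_k(t)=a_kt+d_k$ we have $|P_k(t_1)-P_k(t_2)|=|a_k|\,|t_1-t_2|$, and $0<a_k<1$ because $P_k$ is a contractive homeomorphism of $J$ onto the proper subinterval $J_k$.

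The crux of the argument is to absorb the $h$-shift built into $d_*$ using the self-referentiality of the FIF. Starting from the functional equation $h(t')=\alpha_k h(P_k^{-1}(t'))+q_k(P_k^{-1}(t'))$, valid for $t'\in J_k$, I would substitute $t'=P_k(t)$ with $t\in J$ to obtain the identity
$$h(P_k(t))=\alpha_k h(t)+q_k(t)=F_k(t,h(t)),\qquad t\in J.$$
Applying this with $t=t_1$ and $t=t_2$ replaces each $h(P_k(t_i))$ by $F_k(t_i,h(t_i))$, so the vector inside the norm becomes $\big(F_k(t_1,\boldsymbol{z_1})-F_k(t_1,h(t_1))\big)-\big(F_k(t_2,\boldsymbol{z_2})-F_k(t_2,h(t_2))\big)$. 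Using the affine form $F_k(t,\boldsymbol{z})=\alpha_k\boldsymbol{z}+q_k(t)$, the translation terms $q_k(t_i)$ cancel within each bracket and the common factor $\alpha_k$ pulls out, yielding $\alpha_k\big[(\boldsymbol{z_1}-h(t_1))-(\boldsymbol{z_2}-h(t_2))\big]$. Hence the second summand equals $|\alpha_k|\,\big\|(\boldsymbol{z_1}-h(t_1))-(\boldsymbol{z_2}-h(t_2))\big\|$.

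Combining the two pieces gives
$$d_*\big(W_k(t_1,\boldsymbol{z_1}),W_k(t_2,\boldsymbol{z_2})\big)=|a_k|\,|t_1-t_2|+|\alpha_k|\,\big\|(\boldsymbol{z_1}-h(t_1))-(\boldsymbol{z_2}-h(t_2))\big\|\le c_k\, d_*\big((t_1,\boldsymbol{z_1}),(t_2,\boldsymbol{z_2})\big),$$
where $c_k=\max\{a_k,|\alpha_k|\}$. Since $-1<\alpha_k<1$ and $0<a_k<1$, we get $c_k<1$, which shows that $W_k$ is a contraction on the complete metric space $\big(J\times\mathbb{R}^M,d_*\big)$.

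The step I expect to be the main obstacle is precisely the passage through the functional equation $h(P_k(t))=F_k(t,h(t))$: it is what lets the two unrelated base points $t_1,t_2$ recombine cleanly, and it is the reason the metric $d_*$ is defined with the correction terms $-h(t_i)$ rather than using the bare Euclidean norm. I would also flag that the clean factorization of the second summand genuinely relies on the affine structure $F_k(t,\boldsymbol{z})=\alpha_k\boldsymbol{z}+q_k(t)$: for a merely Lipschitz $F_k$ the difference $F_k(t_i,\boldsymbol{z_i})-F_k(t_i,h(t_i))$ retains a dependence on the base point $t_i$, so a triangle-inequality split would leave a residual term measuring the variation of $F_k$ in its first argument, and controlling it would require an additional hypothesis such as joint Lipschitz continuity of $F_k$ in $t$.
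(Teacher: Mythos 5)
Your proposal is correct and follows essentially the same route as the paper: split $d_*$ into the base and fibre parts, use the self-referential identity $h(P_k(t))=\alpha_k h(t)+q_k(t)$ to reduce the fibre term to $|\alpha_k|\,\|(\boldsymbol{z_1}-h(t_1))-(\boldsymbol{z_2}-h(t_2))\|$, and conclude with contraction ratio $\max\{|a_k|,|\alpha_k|\}<1$. Your closing remarks on why $d_*$ carries the $-h(t_i)$ correction and on the reliance on the affine form of $F_k$ accurately identify the mechanism the paper's own computation uses implicitly.
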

	\begin{proof}
		Let $(t_1,\boldsymbol{z_1}),(t_2,\boldsymbol{z_2})\in J\times \mathbb{R}^M.$ Then we have 
		\begin{align*}
			d_*(W_k(t_1,\boldsymbol{z_1}),W_k(t_2,\boldsymbol{z_2}))&=d_*((P_k(t_1),F_k(t_1,\boldsymbol{z_1})),(P_k(t_2),F_k(t_2,\boldsymbol{z_2})))\\
			&=|P_k(t_1)-P_k(t_2)| + \|(F_k(t_1,\boldsymbol{z_1})-h(P_k(t_1)))\\&-(F_k(t_2,\boldsymbol{z_2})-h(P_k(t_2)))\|\\
			&=|P_k(t_1)-P_k(t_2)| + \|(\alpha_k \boldsymbol{z_1}+q_k(t_1)-h(P_k(t_1)))\\&-(\alpha_k \boldsymbol{z_2}+q_k(t_2)-h(P_k(t_2)))\|\\&
			\leq |a_k||t_1-t_2|+\|\alpha_k(\boldsymbol{z_1}-h(t_1))-\alpha_k(\boldsymbol{z_2}-h(t_2))\|\\&
			\leq\max\{|a_{k}|,|\alpha_{k}|\}~~ d_*((t_1,\boldsymbol{z_1}),(t_2,\boldsymbol{z_2})).
		\end{align*}
		Since $\max\{|a_{k}|,|\alpha_{k}|\} <1$, it follows that $W_k$ is a contraction map for each $k\in T.$
	\end{proof}
	\par
	In the following theorem, we describe bounds for the Hausdorff dimension of the graph of vector-valued FIF.
	
	\begin{theorem}\label{th3.6}
		Let $\mathcal{I}:=\{J\times \mathbb{R}^M;~~W_k :k\in T\}$ be the IFS as defined earlier such that $$  c_k d((t_1,\boldsymbol{z_1}),(t_2,\boldsymbol{z_2}) ) \le d(W_k(t_1,\boldsymbol{z_1}) , W_k(t_2,\boldsymbol{z_2})) \le C_k d((t_1,\boldsymbol{z_1}),(t_2,\boldsymbol{z_2})) ,$$ where  $(t_1,\boldsymbol{z_1}),(t_2,\boldsymbol{z_2}) \in J\times \mathbb{R}^M$ and $0 < c_k \le C_k < 1 ~ \forall~ k \in T .$ Then $r \le  \dim_H(G(h)) \le R  ,$ where $h$ is fractal function and $r$, $R$ are given by  $ \sum\limits_{k\in T} c_k^{r} =1$ and $ \sum\limits_{k\in T} C_k^{R} =1$, respectively.
	\end{theorem}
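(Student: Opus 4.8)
The plan is to establish the two inequalities by independent methods: the upper bound $\dim_H(G(h)) \le R$ by a direct cylinder covering, and the lower bound $\dim_H(G(h)) \ge r$ through a mass distribution argument built on the fact that the present IFS automatically satisfies the open set condition.

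For the upper bound I would iterate the self-referential identity $G(h) = \bigcup_{k \in T} W_k(G(h))$. Writing $W_{\mathbf{i}} = W_{i_1} \circ \cdots \circ W_{i_n}$ and $C_{\mathbf{i}} = C_{i_1} \cdots C_{i_n}$ for a word $\mathbf{i} = (i_1,\dots,i_n) \in T^n$, the right-hand hypothesis makes each $W_k$ Lipschitz with constant $C_k$, so $\{W_{\mathbf{i}}(G(h))\}_{\mathbf{i}\in T^n}$ is a cover of $G(h)$ with $\mathrm{diam}\,W_{\mathbf{i}}(G(h)) \le C_{\mathbf{i}}\,\mathrm{diam}\,G(h)$. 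Using $\sum_{k\in T} C_k^R = 1$ one gets
\[
\sum_{\mathbf{i}\in T^n}\big(\mathrm{diam}\,W_{\mathbf{i}}(G(h))\big)^R \le \big(\mathrm{diam}\,G(h)\big)^R\Big(\sum_{k\in T}C_k^R\Big)^n = \big(\mathrm{diam}\,G(h)\big)^R .
\]
Since $\max_k C_k < 1$, the mesh of this cover tends to $0$ as $n \to \infty$, whence $\mathcal{H}^R(G(h)) \le (\mathrm{diam}\,G(h))^R < \infty$ and $\dim_H(G(h)) \le R$. No separation is needed here.

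For the lower bound I would first record that the IFS meets the open set condition: with $U = (x_1,x_N) \times V$ for a large open ball $V \subset \mathbb{R}^M$, the maps $W_k$ contract the second coordinate, so $W_k(U) \subseteq U$, while the sets $W_k(U)$ are pairwise disjoint because their $t$-projections lie in the disjoint intervals $(x_k,x_{k+1})$. I would then transport the measure $\mu$ on $\Omega$ associated with the probability vector $(c_k^r)_{k\in T}$ --- a genuine probability vector precisely because $\sum_k c_k^r = 1$ --- to a measure $\nu = \pi_*\mu$ on $G(h)$ via the coding (address) map $\pi:\Omega \to G(h)$. For small $\rho$, the stopping-time antichain $\mathcal{Q}_\rho = \{\mathbf{i} : c_{i_1}\cdots c_{i_n} \le \rho < c_{i_1}\cdots c_{i_{n-1}}\}$ partitions $\Omega$, and along it every cylinder carries mass $\nu\big(W_{\mathbf{i}}(G(h))\big) = p_{\mathbf{i}} = (c_{i_1}\cdots c_{i_n})^r \le \rho^r$.

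The decisive step is a geometric counting estimate: bounding, uniformly in $x$ and $\rho$, the number of words $\mathbf{i} \in \mathcal{Q}_\rho$ whose piece $W_{\mathbf{i}}(G(h))$ meets $B(x,\rho)$. The open set condition makes the sets $\{W_{\mathbf{i}}(U)\}_{\mathbf{i}\in\mathcal{Q}_\rho}$ pairwise disjoint, and the lower inequality $c_k d \le d(W_k\cdot,W_k\cdot)$ forces $\mathrm{diam}\,W_{\mathbf{i}}(U) \ge c_{\mathbf{i}}\,\mathrm{diam}\,U \gtrsim \rho$; a packing comparison in the spirit of \cite[Chapter 9]{Fal} would then bound the count by a constant $K$, giving $\nu(B(x,\rho)) \le K\rho^r$ and hence $\dim_H(G(h)) \ge r$ by the mass distribution principle \cite{Fal}. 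I expect this counting to be the main obstacle: the gap between $c_k$ and $C_k$, together with the shearing encoded in $W_k(t,\boldsymbol{z}) = (P_k(t),\alpha_k \boldsymbol{z} + q_k(t))$, means the disjoint pieces $W_{\mathbf{i}}(U)$ need not each contain a round ball of radius comparable to $\rho$, so the naive volume count does not apply verbatim and must be supplemented by the separation of the $t$-projections $J_{\mathbf{i}}=P_{i_1}\circ\cdots\circ P_{i_n}(J)$, which tile $J$ with disjoint interiors. Reconciling these scales is the delicate point, and it is what forces the conclusion to take the form of the two-sided estimate $r \le \dim_H(G(h)) \le R$ rather than an exact value.
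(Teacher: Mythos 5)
Your upper bound is exactly the paper's (it simply invokes Proposition 9.6 of \cite{Fal}, which is your cylinder-cover computation), but your lower bound takes a genuinely different route, and it is the route where your proposal is still incomplete. The paper does \emph{not} attempt the Moran-type counting argument you describe. Instead it observes that the IFS satisfies the strong open set condition with $O_*=(x_1,x_N)\times\mathbb{R}^M$, picks a finite word $\sigma$ with $W_\sigma(G(h))\subset O_*$, and for each $n$ forms the sub-IFS $\{W_{i\sigma}: i\in T^n\}$, whose pieces are pairwise disjoint compact sets (strong separation). Falconer's Proposition 9.7 then applies off the shelf to give $\dim_H(G(h))\ge \dim_H(B_n^*)\ge r_n$ with $\sum_{i\in T^n}c_{i\sigma}^{\,r_n}=1$, and a short contradiction argument ($c_\sigma^{-r}\ge c_{\max}^{\,n(\dim_H(G(h))-r)}\to\infty$ if $\dim_H(G(h))<r$) shows $r_n\to$ something $\ge r$. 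This completely sidesteps the geometric counting you correctly identify as the delicate point of your approach; the price is the extra limiting step in $n$.

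Your approach can be made to work, but you have left the decisive lemma unproved, and you should close it rather than defer it: the bound on the number of stopping-time pieces meeting $B(x,\rho)$ does not come from roundness of the sets $W_{\mathbf{i}}(U)$ (as you note, they are sheared slabs), but it does come from the $t$-projections, \emph{provided} you first establish $c_k\le |a_k|$ for every $k$. This is not an extra hypothesis; it follows from the assumed lower Lipschitz bound by choosing $\boldsymbol{z_1}-\boldsymbol{z_2}=-\alpha_k^{-1}\bigl(q_k(t_1)-q_k(t_2)\bigr)$, which kills the second coordinate of $W_k(t_1,\boldsymbol{z_1})-W_k(t_2,\boldsymbol{z_2})$ and forces $c_k\,|t_1-t_2|\le c_k\, d((t_1,\boldsymbol{z_1}),(t_2,\boldsymbol{z_2}))\le |a_k|\,|t_1-t_2|$ (the same computation in the metric $d_*$ gives $c_k\le\min\{|a_k|,|\alpha_k|\}$). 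With that, every interval $J_{\mathbf{i}}=P_{i_1}\circ\cdots\circ P_{i_n}(J)$ for $\mathbf{i}$ in your antichain $\mathcal{Q}_\rho$ has length at least $c_{\mathbf{i}}|J|\ge c_{\min}\rho|J|/\mathrm{diam}(G(h))$, and these intervals have pairwise disjoint interiors, so only a bounded number of them can meet the $t$-projection of $B(x,\rho)$, which has length $2\rho$. That yields $\nu(B(x,\rho))\le K\rho^r$ and the mass distribution principle finishes the lower bound. Until you supply the inequality $c_k\le|a_k|$ and this one-dimensional packing count explicitly, your lower bound is a plan rather than a proof.
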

	
	\begin{proof}
		By using Proposition $9.6$ in \cite{Fal}, we easily get the required upper bound of the Hausdorff dimension of the graph of fractal function $h$. Next, we shall determine a lower bound of $\dim_H(G(h)).$
		Let $O_* = (x_1,x_N) \times \mathbb{R}^M.$ Then  $ W_i(O_*) \cap W_{j}(O_*)=\emptyset$ for each  $i\ne j \in T$ because $P_i\big((x_1,x_N)\big) \cap P_j\big((x_1,x_N)\big)=\emptyset   ~~~~\forall~~i\ne j \in T.$ We can easily observe that for each $i\in T$, $W_i(O_*)\subset O_*$ and  $ O_* \cap G(h) \ne \emptyset.$ This implies that the  IFS $\mathcal{I}$ satisfies the SOSC. Since  $ O_* \cap G(h) \ne \emptyset,$ there exists an index $\sigma\in T^*$ such that $W_\sigma(G(h))\subset O_*,$ where $T^*:=\cup_{n \in \mathbb{N}}T^n$. For each $n\in \mathbb{N}$, let $B_n^*$ be the attractor of the IFS  $\mathcal{I}_n=\{J\times \mathbb{R}^M;~~W_{i\sigma}: i \in T^n\}$. After observing the code space of the IFS $\mathcal{I}$ and the IFS $\mathcal{I}_n$, it is clear that $B_n^*\subseteq G(h)$. For $i\ne j \in T^n$, $W_i(O_*) \cap W_{j}(O_*)=\emptyset.$ It follows that $W_{i\sigma}(B_n^*)\cap W_{j\sigma}(B_n^*)=\emptyset$  for $i\ne j\in T^n$. Thus, the IFS $\mathcal{I}_n=\{J\times \mathbb{R}^M; ~W_{i\sigma}: i \in T^n\}$ satisfies the SSC. So, it is clear that the IFS $\mathcal{I}_n$ satisfies the hypothesis of Proposition $9.7$ in \cite{Fal}. Therefore, Proposition $9.7$ in \cite{Fal} yields that $ r_n \le \dim_H(B_n^*)$, where $r_n$ is given by  $ \sum_{ i \in T^n} c_{i\sigma}^{r_n} =1.$ Thus,  $  r_n \le \dim_H(B_n^*) \le \dim_H(G(h))$ because $B_n^*\subset G(h)$. Suppose that $ \dim_H(G(h)) < r.$ This implies that  $ r_n < r $ for all $n\in \mathbb{N}$. Let $ c_{max}=\max\{c_1, c_2, \dots,c_{N-1}\}.$ We have
		$$
		c_{\sigma}^{- r_n}  = \sum_{ i \in T^n} c_{i}^{r_n}\ \ge \sum_{ i \in T^n} c_{i}^{r} c_{i}^{\dim_H(G(h)) -r} \ge \sum_{ i \in T^n} c_{i}^{r} c_{max}^{n(\dim_H(G(h)) - r)}. $$
		This implies that $$c_{\sigma}^{- r} \geq c_{max}^{n(\dim_H(G(h)) -r)}. $$ 
		We get a contradiction for large values of $n\in \mathbb{N} $. Thus, our assumption is wrong. Therefore, we obtain $ \dim_H(G(h)) \ge  r.$ Thus, the proof is complete.
	\end{proof}
	\begin{remark}
		In the above theorem, we can also take the mapping $P_k$ to be non-linear for all $k\in T$ and also it is not necessary that $\alpha_k$'s are constants. That is, one can consider scalings function $\alpha_k: J \to \mathbb{R}$ such that $\|\alpha_k\|_{\infty} < 1$ ( see, for instance, \cite{M2}).
	\end{remark}
	\begin{remark}
		We may compare the above result with Theorem $4$ of \cite{MF1}, wherein Barnsley proved the aforesaid theorem on dimension of FIF using potential theoretic approach. However, our approach is different from his approach, and our result also gives the same dimension bounds under less restrictive conditions. To be precise, following the same notation as in \cite{MF1}, the assumptions taken therein
		$$ t_1\cdot t_N \le  (\text{Min}\{a_1, a_N \}) \Big(\sum_{n=1}^N t_n^l\Big)^{2/l}~\text{and}~ L_n(x)=a_nx+h_n$$ are not needed to obtain the required lower bound. In this paper, we denote $L_n$ by $P_n$, $h_n $ by $d_n$ and  $t_n$ by $c_n.$
	\end{remark}
	\begin{remark}
		In \cite{MP}, Massopust calculated the exact value of the box dimension of the linear affine vector-valued FIF using covering method under some conditions. However, our above result gives bounds for the Hausdorff dimension of non-linear vector-valued FIF.  
	\end{remark} 
	\begin{remark}
		Barnsley and Massopust gave a formula for the exact value of the box dimension of bilinear FIFs using their own technique in Theorem $6$ of \cite{MF3}. However, it is worth to note that if we choose $q_k(x)= h(l_k(x))-S_k(l_k(x))b(x)$ and $\alpha_k(x)=S_k(l_k(x))$, then our class for which we give dimensional result will reduce to the class of bilinear FIFs. Thus, our result gives dimensional result for a much bigger class.
	\end{remark}
	\begin{remark}
		In \cite{MR}, Roychowdhury has considered hyperbolic recurrent IFS consisting of the bi-Lipschitz mapping. He has obtained bounds of the Hausdorff and box dimension of this IFS by using volume arguments and pressure function under the open set condition. Note that the recurrent IFS is a generalized version of the IFS, hence so is Roychowdhury's result. However, in our proof we do not use any pressure function and volume argument. Thus, our proof can be  done for general complete metric spaces. 
	\end{remark}
	The next theorem can be obtained by \cite{H}, however, we include its detailed proof for completeness and record.
	\begin{theorem}\label{th3.9}
		Let $W_k:J\times \mathbb{R}^M \rightarrow  J\times \mathbb{R}^M $ be defined by $W_k(t,\boldsymbol{z})=\big(P_k(t),F_k(t,\boldsymbol{z})\big) $ as earlier. Also, let $(p_1,\dots, p_{N-1})$ be a probability vector. Then there exists a unique Borel probability measure $\mu_*$ supported on the graph $G(h)$ of the fractal interpolation function such that $$\mu_*=\sum_{k \in T}p_k\mu_* \circ W_k^{-1}. $$
	\end{theorem}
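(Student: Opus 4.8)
The statement is a standard existence-and-uniqueness result for an invariant measure attached to an IFS with probabilities, so the plan is to invoke the Hutchinson framework on the right space.

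\medskip

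\textbf{Approach.}
The plan is to realize $\mu_*$ as the unique invariant measure of the IFS $\mathcal{I}=\{J\times\mathbb{R}^M;\,W_1,\dots,W_{N-1}\}$ equipped with the probability vector $(p_1,\dots,p_{N-1})$, and then to argue that its support must be $G(h)$. First I would fix the complete metric space $\big(J\times\mathbb{R}^M,d_*\big)$ from the preceding discussion, on which Theorem \ref{W_K} guarantees that each $W_k$ is a contraction with ratio $s_k:=\max\{|a_k|,|\alpha_k|\}<1$. Let $\mathcal{P}(J\times\mathbb{R}^M)$ denote the space of Borel probability measures with compact support, metrized by the Monge--Kantorovich (Hutchinson) metric
\[
d_{MK}(\nu_1,\nu_2)=\sup\Big\{\textstyle\int g\,d\nu_1-\int g\,d\nu_2:\ g\ \text{is }1\text{-Lipschitz w.r.t. }d_*\Big\}.
\]
It is classical that $(\mathcal{P},d_{MK})$ restricted to measures supported in a fixed compact set is complete.

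\medskip

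\textbf{Key steps.}
The second step is to define the Markov (transfer) operator $M:\mathcal{P}\to\mathcal{P}$ by
\[
M(\nu)=\sum_{k\in T}p_k\,\nu\circ W_k^{-1},
\]
and to verify that $M$ is a contraction in $d_{MK}$: for any $1$-Lipschitz $g$, each $g\circ W_k$ is $s_k$-Lipschitz, so a short estimate gives $d_{MK}(M\nu_1,M\nu_2)\le\big(\max_k s_k\big)\,d_{MK}(\nu_1,\nu_2)$ with $\max_k s_k<1$. Since iterates of $M$ keep the support inside a fixed compact neighbourhood of $G(h)$ (here I use that $G(h)$ is the attractor and the $W_k$ map a large compact invariant set into itself), the Banach fixed point theorem yields a unique $\mu_*$ with $M(\mu_*)=\mu_*$, which is exactly the displayed self-referential identity. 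The third step is to locate the support: writing $K=\operatorname{supp}(\mu_*)$, invariance of $\mu_*$ forces $K=\bigcup_{k\in T}W_k(K)$, so $K$ is a nonempty compact set invariant under the Hutchinson operator $S$; by uniqueness of the attractor of $\mathcal{I}$ (Banach contraction principle in $(H(X),\mathcal{D})$, as recalled in Section \ref{se2}) and since the attractor is $G(h)$, we conclude $\operatorname{supp}(\mu_*)=G(h)$.

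\medskip

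\textbf{Main obstacle.}
The genuinely delicate point is the completeness/compact-support bookkeeping for $(\mathcal{P},d_{MK})$: the Banach fixed point theorem needs a complete metric space, and $d_{MK}$ is only well behaved (finite and complete) once all competing measures are supported in a common compact set. So the crux is to exhibit a compact $X_0\subset J\times\mathbb{R}^M$ with $W_k(X_0)\subset X_0$ for every $k$, restrict $M$ to probability measures supported in $X_0$, and check that this subspace is closed in $(\mathcal{P},d_{MK})$. Producing $X_0$ is straightforward because each $W_k$ is a contraction of the complete space $(J\times\mathbb{R}^M,d_*)$ and $J$ is already compact in the first coordinate, so a sufficiently large closed ball in the $\boldsymbol{z}$-coordinate (around the graph) is mapped into itself. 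Once this is in place, the contraction estimate and the support identification are routine, and uniqueness is automatic from the fixed point theorem.
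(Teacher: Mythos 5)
Your proposal is correct and follows essentially the same route as the paper: the Markov operator $\mathcal{M}(\nu)=\sum_{k\in T}p_k\,\nu\circ W_k^{-1}$ on $\mathcal{P}(J\times\mathbb{R}^M)$ with the Hutchinson/Monge--Kantorovich metric induced by $d_*$, a contraction estimate with ratio $\max_k\max\{|a_k|,|\alpha_k|\}<1$, the Banach fixed point theorem, and identification of the support via $\operatorname{supp}\mu_*=\bigcup_{k\in T}W_k(\operatorname{supp}\mu_*)$ together with uniqueness of the attractor. Your explicit restriction to measures supported in a compact invariant set $X_0$ is in fact a point the paper glosses over (it asserts completeness of the Hutchinson metric on all of $\mathcal{P}(J\times\mathbb{R}^M)$ without this bookkeeping), so your version is, if anything, the more careful one.
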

	\begin{proof}
		In view of Theorem \ref{W_K}, we have a metric $d_*$ with respect to which each mapping $W_k:J\times \mathbb{R}^M \rightarrow  J\times \mathbb{R}^M $ will be contraction with contracting ratio $C_k=\max\{|a_{k}|,|\alpha_{k}|\}$ for each $k\in T$. 
		Recall that the collection of all Borel probability measures on $J \times \mathbb{R}^M$, denoted by $\mathcal{P}(J \times \mathbb{R}^{M})$, is a complete metric space with respect to the Hutchinson metric $d_H$ defined as $$d_H(\mu,\nu )=\sup \Big\{\Big|\int f d\mu(t,\boldsymbol{z}) - \int f d\nu(t,\boldsymbol{z})\Big|: ~f:J \times \mathbb{R}^M \to \mathbb{R}, \mbox{ Lip}(f) \le 1 \Big\},$$ where the supremum is taken over all Lipschitz functions $f: J\times \mathbb{R}^M \to \mathbb{R}$ satisfying $ \mbox{Lip}(f):= \inf\{L_f>0: |f(t_1,\boldsymbol{z_1})-f(t_2,\boldsymbol{z_2})| \le L_f d_*((t_1,\boldsymbol{z_1}),(t_2,\boldsymbol{z_2})) ~~~\forall~(t_1,\boldsymbol{z_2}),(t_2,\boldsymbol{z_2}) \in  J\times \mathbb{R}^M\}<1.$
		Define a mapping $\mathcal{M}: \mathcal{P}(J \times \mathbb{R}^{M}) \to \mathcal{P}(J \times \mathbb{R}^{M})$ by $\mathcal{M}(\mu)=\sum_{k \in T} p_k \mu  \circ W_k^{-1}.$ Now, we have
		\begin{align*}
			&d_H(\mathcal{M}(\mu),\mathcal{M}(\nu) )\\&=\sup \Big\{\Big|\int f d\mathcal{M}(\mu)(t,\boldsymbol{z}) - \int f d\mathcal{M}(\nu)(t,\boldsymbol{z})\Big|: \mbox{Lip}(f) \le 1 \Big\}\\&=\sup \Big\{\Big|\sum_{k \in T} p_k\int f d \mu \circ W_k^{-1}(t,\boldsymbol{z}) - \sum_{k \in T} p_k \int f d\nu \circ W_k^{-1}(t,\boldsymbol{z})\Big|: \mbox{Lip}(f) \le 1 \Big\}\\&=\sup \Big\{\Big|\sum_{k \in T} p_k\int f \circ W_k(t,\boldsymbol{z}) d \mu(t,\boldsymbol{z})  - \sum_{k \in T} p_k \int f\circ W_k(t,\boldsymbol{z}) d\nu(t,\boldsymbol{z}) \Big|: \mbox{Lip}(f) \le 1 \Big\}\\& =  \sum_{k \in T} p_k \sup \Big\{ C_k \Big|\int  \frac{1}{C_k} f \circ W_k(t,\boldsymbol{z}) d \mu(t,\boldsymbol{z})  -  \int \frac{1}{C_k} f\circ W_k(t,\boldsymbol{z}) d\nu(t,\boldsymbol{z}) \Big|: \mbox{Lip}(f) \le 1 \Big\}\\& \le   \sum_{k \in T} p_k C_{\max} \sup \Big\{  \Big|\int  \frac{1}{C_k} f \circ W_k(t,\boldsymbol{z}) d \mu(t,\boldsymbol{z})  -  \int \frac{1}{C_k} f\circ W_k(t,\boldsymbol{z}) d\nu(t,\boldsymbol{z}) \Big|: \mbox{Lip}(f) \le 1 \Big\}\\&\leq  \sum_{k \in T} p_k C_{\max} d_H(\mu,\nu)\\& = C_{\max} d_H(\mu,\nu).
		\end{align*}
		Since $C_{\max}=\max\{C_k: k \in T\}< 1$, the mapping $\mathcal{M}$ is a contraction. Now, the Banach fixed point theorem gives a unique probability measure $\mu_*$ such that $\mu_*=\sum_{k \in T}p_k\mu_* \circ W_k^{-1}. $  It remains to show that $\text{supp}~\mu_*=G(h)$.  For this, we first prove that $\text{supp}~\mu_* \subseteq \cup_{k\in T} W_k (\text{supp}  ~\mu_*).$ On applying $\mu_*=\sum_{k\in T} p_k\mu_* \circ W_k^{-1}$ to $\cup_{k\in T} W_k (\text{supp}~\mu_*),$ we obtain  
		
		\begin{align*}
			\mu_* (\cup_{k\in T} W_k (\text{supp}~\mu_*))&=\sum_{k\in T} p_k \mu_* \circ W_k^{-1}(\cup_{k\in T} W_k (\text{supp}~\mu_*)))\\&
			=\sum_{k\in T} p_k \mu_* (W_k^{-1}(\cup_{k\in T} W_k (\text{supp}~\mu_*)))\\&\ge \sum_{k\in T} p_k \mu_* (W_k^{-1}( W_k (\text{supp}~ \mu_*)))\\&\ge \sum_{k\in T} p_k \mu_* (\text{supp}~ \mu_*)\\&=1.
		\end{align*}
		The above yields that $\text{supp}~\mu_* \subseteq \cup_{k\in T} W_k (\text{supp}~  \mu_*).$ 
		Now, we show that $\cup_{k\in T} W_k (\text{supp}~\mu_*) \subseteq \text{supp}~\mu_* .$ On applying $\mu_*=\sum_{k\in T} p_k\mu_* \circ W_k^{-1}$ to $\text{supp}~\mu_*$, we get 
		$$1=\mu_*  (\text{supp}~\mu_*)=\sum_{k \in T} p_k \mu_* (W_k^{-1} (\text{supp}~\mu_*)) \le \sum_{k \in T} p_k
		=1.$$
		
		Here, by using  the fact that $ \sum_{k \in T} p_k \mu_* (W_k^{-1} (\text{supp}~ \mu_*))=1$  and $\sum_{k \in T} p_k=1$, we have $ \mu_* (W_k^{-1} (\text{supp}~ \mu_*))=1$ for all $k\in T.$ Hence, $\text{supp}~\mu_* \subseteq W_k^{-1}(\text{supp}~ \mu_*)$, which implies that $W_k (\text{supp}~\mu_*) \subseteq (\text{supp}~\mu_*)$ for all $k\in T$. So, $\cup_{k \in T} W_k (\text{supp}~\mu_*) \subseteq (\text{supp}~\mu_*) $. Hence, $\text{supp}~\mu_* = \cup_{k\in T} W_k (\text{supp}~  \mu_*).$ This completes the proof.
	\end{proof}
	

	In the next theorem, we obtain bounds for the Hausdorff dimension of associated invariant measures supported on the graph of vector-valued FIF.
	\begin{theorem}
		Let $\mathcal{I}:=\{J\times \mathbb{R}^M;~~W_k :k\in T\}$ be the IFS as defined earlier such that $$  c_k d((t_1,\boldsymbol{z_1}),(t_2,\boldsymbol{z_2}) ) \le d(W_k(t_1,\boldsymbol{z_1}) , W_k(t_2,\boldsymbol{z_2})) \le C_k d((t_1,\boldsymbol{z_1}),(t_2,\boldsymbol{z_2})) ,$$ where  $(t_1,\boldsymbol{z_1}),(t_2,\boldsymbol{z_2}) \in J\times \mathbb{R}^M$ and $0 < c_k \le C_k < 1 ~~ \forall~~ k \in T .$ Also let $(p_1,p_2,\cdots,p_{N-1})$ be a probability vector corresponding to the IFS $\mathcal{I}$. Then $r \le  \dim_H(\mu_*) \le R  ,$ where $\mu_*$ is an invariant measure corresponding to the IFS $\mathcal{I}$ and $r$, $R$ are given by $ \sum\limits_{k\in T} c_k^{r} =1$ and $ \sum\limits_{k\in T} C_k^{R} =1$, respectively.
	\end{theorem}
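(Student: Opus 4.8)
The plan is to establish the two inequalities separately, mirroring the structure of the graph-dimension result in Theorem \ref{th3.6} and leaning on the local-dimension characterisation behind Proposition \ref{loc}. Throughout I write $x=\pi(\omega)$ for the natural coding map $\pi:\Omega\to G(h)$, and I use that the Bernoulli measure $\mu$ on $\Omega$ with weights $(p_1,\dots,p_{N-1})$ pushes forward under $\pi$ to the invariant measure $\mu_*$ of Theorem \ref{th3.9}, whose support is exactly $G(h)$.

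For the upper bound I would argue directly from the support. Since $\mu_*(G(h))=1>0$ and $G(h)$ is Borel, the very definition of the Hausdorff dimension of a measure gives $\dim_H(\mu_*)\le\dim_H(G(h))$, and combining this with the bound $\dim_H(G(h))\le R$ already proved in Theorem \ref{th3.6} yields $\dim_H(\mu_*)\le R$ at once. Alternatively, and more in the spirit of Proposition \ref{loc}, one can reach the same conclusion by testing with the subsequence $\rho_n=C_{\omega_1}\cdots C_{\omega_n}\operatorname{diam}(G(h))$: the cylinder piece $W_{\omega_1\cdots\omega_n}(G(h))$ then lies inside $B(\pi(\omega),\rho_n)$, so $\mu_*(B(x,\rho_n))\ge p_{\omega_1}\cdots p_{\omega_n}$, and after taking logarithms and applying the strong law of large numbers to the i.i.d.\ coding one finds $\underline{\dim}_{loc}\mu_*(x)\le R$ for $\mu_*$-almost every $x$, whence Proposition \ref{loc} gives $\dim_H(\mu_*)\le R$.

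For the lower bound I would use the complementary mass-distribution principle: if $\underline{\dim}_{loc}\mu_*(x)\ge r$ for $\mu_*$-almost every $x$, then $\dim_H(\mu_*)\ge r$. The task thus reduces to an upper estimate of the form $\mu_*(B(x,\rho))\le c\,\rho^{\,r}$ for small $\rho$. Here I would first invoke the strong open set condition established inside the proof of Theorem \ref{th3.6}, with $O_*=(x_1,x_N)\times\mathbb{R}^M$ and $W_i(O_*)\cap W_j(O_*)=\emptyset$ for $i\neq j$, to guarantee that only a bounded number $M$ of cylinder pieces of a given generation can meet a fixed ball. For a prescribed $\rho$ I would then stop each branch at the first generation $n$ with $c_{\omega_1}\cdots c_{\omega_n}\operatorname{diam}(G(h))\le\rho$, so that the pieces meeting $B(x,\rho)$ form an antichain of comparable diameter, and I would sum their masses.

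The step I expect to be the genuine obstacle is converting this mass sum into a power $\rho^{\,r}$. The measure of a cylinder piece is governed by the probabilities $p_{\mathbf{i}}=p_{i_1}\cdots p_{i_n}$, whereas its diameter is governed by the ratios $c_{\mathbf{i}}=c_{i_1}\cdots c_{i_n}$, and these two families are linked to the exponent $r$ only through the defining relation $\sum_{k\in T}c_k^{\,r}=1$; forcing $\sum p_{\mathbf{i}}\le c\,\rho^{\,r}$ along the stopped antichain therefore requires controlling $p_{\mathbf{i}}$ by $c_{\mathbf{i}}^{\,r}$, which is exactly where the interplay between the probability vector and the contraction ratios is decisive. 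This is the delicate point, since a naive computation of the local dimension only produces the probability-dependent exponent $\dfrac{\sum_k p_k\log p_k}{\sum_k p_k\log c_k}$, and reconciling this with the uniform lower value $r$ is the crux of the argument. Once the estimate $\sum p_{\mathbf{i}}\le c\,\rho^{\,r}$ is secured, the bounded-overlap count contributes only the harmless constant $M$, the mass-distribution principle applies, and the two bounds combine to give $r\le\dim_H(\mu_*)\le R$.
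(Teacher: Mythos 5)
Your upper bound is correct and is exactly the paper's argument: since $\mu_*(G(h))=1$ by Theorem \ref{th3.9}, the definition of $\dim_H(\mu_*)$ together with $\dim_H(G(h))\le R$ from Theorem \ref{th3.6} gives $\dim_H(\mu_*)\le R$ immediately.

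The lower bound, however, contains a genuine gap which you have located but not closed, and which cannot be closed along the route you propose. Your plan reduces to the pointwise estimate $\mu_*(B(x,\rho))\le c\,\rho^{r}$ for $\mu_*$-a.e.\ $x$, which after the stopping-time decomposition requires $p_{\mathbf{i}}\lesssim c_{\mathbf{i}}^{\,r}$ along the stopped antichain, i.e.\ essentially $p_k\le c_k^{\,r}$ for each $k$. For an arbitrary probability vector this is unavailable, and the obstruction is not merely technical: take all $c_k=C_k$ and $(p_1,\dots,p_{N-1})$ different from $(c_1^{\,r},\dots,c_{N-1}^{\,r})$. Then the Birkhoff computation you sketch (the same one carried out in the proof of Theorem \ref{dimM}) yields $\underline{\dim}_{loc}\mu_*(x)=\sum_k p_k\log p_k\,/\sum_k p_k\log c_k$ for $\mu_*$-a.e.\ $x$, and by Gibbs' inequality this quantity is $\le r$, with equality only when $p_k=c_k^{\,r}$. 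So the bound $\mu_*(B(x,\rho))\le c\,\rho^{r}$ fails on a set of full measure, and no constant rescues the mass-distribution principle; the local-dimension route cannot produce the uniform value $r$ for a general probability vector. The paper does not attempt this route: its lower bound works directly from the definition $\dim_H(\mu_*)=\inf\{\dim_H(B):\mu_*(B)>0\}$, arguing by contradiction that any Borel set $B$ with $\mu_*(B)>0$ must satisfy $\dim_H(B)\ge r$, using that $G(h)=\operatorname{supp}\mu_*$ and $\dim_H(G(h))\ge r$ from Theorem \ref{th3.6}. You should also note that the tension you uncovered is real and not an artifact of your method: Theorem \ref{dimM} applied with $C_k=c_k$ gives an upper bound for $\dim_H(\mu_*)$ that is strictly below $r$ whenever the probability vector is not $(c_1^{\,r},\dots,c_{N-1}^{\,r})$, so the "crux" you flag is not a step you can defer --- it is precisely where the claimed lower bound must be confronted, and your outline as written does not prove it.
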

	\begin{proof}
		First, we recall the definition of (lower) Hausdorff dimension of measure $\mu_*$
		$$\dim_H(\mu_*)=\inf\{\dim_H(A): A\in \mathcal{B}(J\times \mathbb{R}^M)~~\text{and}~~~\mu_*(A)>0\},$$
		where $\mathcal{B}(J\times \mathbb{R}^M)$ denotes the set of all Borel-subsets of $J\times \mathbb{R}^M.$ Theorem \ref{th3.9} yields that $\mu_*(G(h))=1.$ Therefore, by the definition of Hausdorff dimension of measure $\mu_*$ and Theorem \ref{th3.6}, we get 
		$$\dim_H(\mu_*)\leq R.$$ For determining a lower bound of $\dim_H(\mu_*)$, let $B$ be any Borel-subset of $J\times \mathbb{R}^M$ such that $\dim_H(B)<r\leq \dim_H(G(h)).$ Our aim is to show that $\mu_*(B)=0$. We prove it by contradiction. Assume that $\mu_*(B)>0$. We define a set $U=\frac{B}{\gamma},$ where $\gamma=\mu_*(B).$ So, it is clear that $\mu_*(U)=1$. Since $G(h)$ is the support of $\mu_*$, therefore,  $G(h)\subset U.$ Thus, by the  monotonic property of Hausdorff dimension, $\dim_H(G(h))\leq \dim_H(U).$ Theorem \ref{th3.6} gives that $r\leq \dim_H(U) .$ Since the Hausdorff dimension of a set does not change by scaling by a non-zero constant, therefore, we get $r\leq \dim_H(B),$ which is a contradiction. Thus, we have  $\mu_*(B)=0.$ Therefore, by the definition of $\dim_H(\mu_*)$, we deduce that $r\leq\dim_H(\mu_*)$. Thus, the proof is completed.
	\end{proof}
	
	In the upcoming result, we establish a more general upper bound for the Hausdorff dimension of invariant measure in terms of the probability vector and the contraction ratios. 
	\begin{theorem}\label{dimM}
		Let $\mathcal{I}:=\{J\times \mathbb{R}^M;~~W_k :k\in T\}$ be the IFS as defined earlier such that $$ d(W_k(t_1,\boldsymbol{z_1}) , W_k(t_2,\boldsymbol{z_2})) \le C_k d((t_1,\boldsymbol{z_1}),(t_2,\boldsymbol{z_2})) ,$$ where  $(t_1,\boldsymbol{z_1}),(t_2,\boldsymbol{z_2}) \in J\times \mathbb{R}^M$ and $0 < C_k < 1 ~ \forall~ k \in T .$ Also, let $(p_1,p_2,\cdots,p_{N-1})$ be a probability vector corresponding to the IFS $\mathcal{I}$. Then $$\dim_H(\mu_*)\leq \frac{\sum\limits_{k\in T}p_k\log{p_k}}{\sum\limits_{k\in T}p_k\log{C_k}},$$ where $\mu_*$ is the invariant measure corresponding to the IFS $\mathcal{I}.$
	\end{theorem}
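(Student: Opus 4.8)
The plan is to bound the lower local dimension of $\mu_*$ from above at $\mu_*$-almost every point and then invoke Proposition \ref{loc}. Recall from Section \ref{se2} that $\mu$ is the Bernoulli (product) measure on the code space $\Omega$ determined by the probability vector $(p_1,\dots,p_{N-1})$, so that under $\mu$ the coordinates $\omega_1,\omega_2,\dots$ are independent and identically distributed with $\mu(\{\omega:\omega_1=k\})=p_k$. Since each $W_k$ is a contraction on the complete metric space $\big(J\times\mathbb{R}^M,d_*\big)$ by Theorem \ref{W_K}, the coding map $\pi:\Omega\to G(h)$ given by $\pi(\omega)=\lim_{n\to\infty}W_{\omega_1}\circ\cdots\circ W_{\omega_n}(t_0,\boldsymbol{z_0})$ is well defined (independently of the base point) and continuous, and the invariance relation from Theorem \ref{th3.9}, together with the uniqueness asserted there, identifies $\mu_*$ with the push-forward $\pi_*\mu$, that is, $\mu_*(A)=\mu(\pi^{-1}(A))$ for every Borel set $A$.

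The geometric heart of the argument is a control of the cylinder images. Writing $W_{\mathbf i}=W_{i_1}\circ\cdots\circ W_{i_n}$, $C_{\mathbf i}=C_{i_1}\cdots C_{i_n}$ and $p_{\mathbf i}=p_{i_1}\cdots p_{i_n}$ for $\mathbf i=(i_1,\dots,i_n)\in T^n$, iterating the contraction bound $d(W_k(u),W_k(v))\le C_k\,d(u,v)$ gives $\mathrm{diam}\,W_{\mathbf i}(G(h))\le C_{\mathbf i}\,D$ with $D:=\mathrm{diam}\,G(h)$, while iterating the self-referential identity $\mu_*=\sum_{k\in T}p_k\,\mu_*\circ W_k^{-1}$ yields $\mu_*\big(W_{\mathbf i}(G(h))\big)\ge p_{\mathbf i}$. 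Now fix $\omega\in\Omega$ and set $x=\pi(\omega)$. For each $n$ the point $x$ lies in $W_{\omega|n}(G(h))$, where $\omega|n=(\omega_1,\dots,\omega_n)$, so with $r_n:=C_{\omega|n}\,D$ we have $W_{\omega|n}(G(h))\subseteq B(x,r_n)$ and therefore $\mu_*(B(x,r_n))\ge p_{\omega|n}$. Because $\log r_n<0$ for large $n$, dividing by $\log r_n$ reverses the inequality and gives
$$\frac{\log\mu_*(B(x,r_n))}{\log r_n}\le\frac{\log p_{\omega|n}}{\log r_n}=\frac{\sum_{j=1}^{n}\log p_{\omega_j}}{\log D+\sum_{j=1}^{n}\log C_{\omega_j}}.$$

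Finally I would pass to the limit along this subsequence. Since $r_n\to 0$, one has $\underline{\dim}_{loc}\mu_*(x)=\liminf_{r\to 0}\frac{\log\mu_*(B(x,r))}{\log r}\le\liminf_{n\to\infty}\frac{\log p_{\omega|n}}{\log r_n}$. Dividing numerator and denominator on the right by $n$ and applying Kolmogorov's strong law of large numbers to the i.i.d.\ sequences $\log p_{\omega_j}$ and $\log C_{\omega_j}$ (both with finite mean, as $T$ is finite), for $\mu$-almost every $\omega$ one gets $\frac1n\sum_{j=1}^n\log p_{\omega_j}\to\sum_{k\in T}p_k\log p_k$ and $\frac1n\sum_{j=1}^n\log C_{\omega_j}\to\sum_{k\in T}p_k\log C_k$, while $\frac1n\log D\to 0$. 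Hence the right-hand side converges to $\frac{\sum_{k\in T}p_k\log p_k}{\sum_{k\in T}p_k\log C_k}$, so $\underline{\dim}_{loc}\mu_*(x)$ is at most this value for $\mu$-a.e.\ $\omega$. Transferring the corresponding full-measure set through $\pi$ via $\mu_*=\pi_*\mu$, the same bound holds for $\mu_*$-almost every $x\in G(h)$, and Proposition \ref{loc} then delivers the claimed estimate on $\dim_H(\mu_*)$.

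I expect the main obstacle to be the careful bookkeeping linking the combinatorics on $\Omega$ to the geometry on $G(h)$: precisely, establishing $\mu_*=\pi_*\mu$ and the lower estimate $\mu_*(W_{\mathbf i}(G(h)))\ge p_{\mathbf i}$, and ensuring that the almost-sure statement on $\Omega$ transfers to a $\mu_*$-almost-sure statement on the graph even though a point may admit several codes. Once this dictionary is in place, the law of large numbers step and the inequality $\liminf_{r\to0}\le\liminf_{n}$ along the subsequence are routine.
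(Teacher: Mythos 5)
Your proposal is correct and follows essentially the same route as the paper: both bound $\mu_*(B(x,r))$ from below by $p_{\omega|n}$ using the containment of the cylinder image $W_{\omega|n}(G(h))$ in a ball of radius comparable to $C_{\omega|n}\,\mathrm{diam}\,G(h)$, then obtain the almost-sure limits of $\frac1n\log p_{\omega|n}$ and $\frac1n\log C_{\omega|n}$ (the paper via Birkhoff's ergodic theorem, you via the strong law of large numbers, which coincide for a Bernoulli measure), and finish with Proposition \ref{loc}. Your treatment is in fact slightly more careful on two points the paper glosses over, namely the identification $\mu_*=\pi_*\mu$ and the transfer of the full-measure statement from $\Omega$ to $G(h)$ despite non-unique codings.
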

	
	\begin{proof}
		Let $(t,h(t))\in G(h)$ and let $B((t,h(t)),r)$  be a ball of radius $r$ with center $(t,h(t))$. There is $\sigma=(\sigma_1,\sigma_2,\cdots)\in \Omega$  such that $\sigma$ is a code of $(t,h(t)).$  Let $q$ be the smallest integer such that 
		$$C_{\sigma_1}\cdot C_{\sigma_2}\cdots C_{\sigma_q} ~~\text{diam}(G(h))< r\leq C_{\sigma_1}\cdot C_{\sigma_2}\cdots C_{\sigma_{q-1}}~~\text{diam}(G(h))$$
		From the definition of $q$, it is clear that  
		$$ W_{\sigma_1}\circ W_{\sigma_2}\circ \cdots W_{\sigma_q}(G(h))\subset B((t,h(t)),r). $$
		Also, we have   
		\begin{align}
			r C_{\min}{\beta}^{-1}&\leq \prod_{i=1}^{q}C_{\sigma_i}<  r{\beta}^{-1}
		\end{align}
		where $C_{\min}=\min\{C_k: k\in T\}$ and $\beta = \text{diam}{(G(h))}.$ Also
		
		$$\mu_*( B((t,h(t)),r))\geq p_{\sigma_1}\cdot p_{\sigma_2}\cdots p_{\sigma_q}=\prod_{i=1}^{q}p_{\sigma_i}.$$
		Then by the previous inequalities, we get
		\begin{equation}\label{e3.2}
			\mu_*( B((t,h(t)),r))\geq \frac{\prod\limits_{i=1}^{q}p_{\sigma_i}}{\prod\limits_{i=1}^{q}C_{\sigma_i}} r C_{\min}{\beta}^{-1}= C_* \cdot r \frac{\prod\limits_{i=1}^{q}p_{\sigma_i}}{\prod\limits_{i=1}^{q}C_{\sigma_i}},
		\end{equation}
		where $C_*= C_{\min}{\beta}^{-1}.$
		
		Now, we define two functions $g_1,g_2: \Omega\to \mathbb{R}$ such that
		$$ g_1(w_1,w_2,\cdots)=\log p_{w_1}~~~\text{and}~~~ g_2(w_1,w_2,\cdots)= \log C_{w_1},$$
		for $w=(w_1,w_2,\cdots)\in \Omega.$ Then, by the application of Birkhoff's Ergodic Theorem, for $\mu$-a.e. $w\in \Omega,$ 
		\begin{align}\label{eq3.3}
			\lim\limits_{n\to \infty}\frac{1}{n}\sum\limits_{i=0}^{n-1}g_1(S_i(w))&=\int_{\Omega}g_1(w)d\mu,\\
			\lim\limits_{n\to \infty}\frac{1}{n}\sum\limits_{i=0}^{n-1}g_2(S_i(w))&=\int_{\Omega}g_2(w)d\mu,
		\end{align}
		where $S_i: \Omega\to \Omega$ is a map defined by $S_i(w)=(w_{i+1},w_{i+2},\cdots)$. First, we simplify terms in above equalities
		$$ \frac{1}{n}\sum\limits_{i=0}^{n-1}g_1(S_i(w))=\frac{1}{n}\sum\limits_{i=0}^{n-1}g_1(w_{i+1},w_{i+2},\cdots) 
		= \frac{1}{n}\sum\limits_{i=0}^{n-1}\log p_{w_{i+1}}
		= \frac{1}{n}\log \prod\limits_{i=1}^{n}p_{w_i}.$$
		
		Also, 
		\begin{align*}
			\int_{\Omega}g_1(w)d\mu=\sum\limits_{k\in T} \int_{[k]}g_1(w)d\mu =  \sum\limits_{k\in T}\mu([k])\log p_{k}= \sum\limits_{k\in T} p_k \log p_k.
		\end{align*}
		Thus, by the above estimation and by using \eqref{eq3.3}, for $\mu$-a.e. $w\in \Omega$, we have 
		\begin{align}\label{eq3.5}
			\lim\limits_{n\to \infty}\frac{1}{n}\log \prod\limits_{i=1}^{n}p_{w_i}=\sum\limits_{k\in T} p_k \log p_k.
		\end{align}
		Similarly for $g_2$, we have
		\begin{align}\label{eq3.6}
			\lim\limits_{n\to \infty}\frac{1}{n}\log \prod\limits_{i=1}^{n}C_{w_i}=\sum\limits_{k\in T} p_k \log C_k.
		\end{align}
		By the definition of $q$, it is clear that $q\to \infty$ if $r\to 0$. Hence, for $\mu$-a.e. $w\in \Omega$, we have
		\begin{align}\label{eq3.7}
			\lim\limits_{r\to 0}\frac{\log r}{q}= \lim\limits_{q\to \infty} \frac{1}{q}\log \prod\limits_{i=1}^{q}C_{w_i}=\sum\limits_{k\in T} p_k \log C_k.
		\end{align}
		Using equations \eqref{eq3.5}, \eqref{eq3.6} and \eqref{eq3.7}, we conclude that for $\mu$-a.e. $w\in \Omega$
		\begin{align}\label{e3.8}
			\lim\limits_{r\to 0}\frac{\log \prod\limits_{i=1}^{q}\frac{p_{w_i}}{C_{w_i}}}{\log r} =\frac{\sum\limits_{k\in T} p_k \log p_k}{\sum\limits_{k\in T} p_k \log C_k}-1 .
		\end{align}
		Combining inequality \eqref{e3.2} and equation \eqref{e3.8}, we have, for $\mu_*$-almost all $(t,\boldsymbol{z})\in J\times \mathbb{R}^M$
		\begin{align*}
			\limsup_{r\to 0}\frac{\log\mu_*(B((t,\boldsymbol{z}),r))}{\log r}\leq \frac{\sum\limits_{k\in T} p_k \log p_k}{\sum\limits_{k\in T} p_k \log C_k}. 
		\end{align*}
		Thus, by Proposition \ref{loc}, we get 
		$$\dim_H(\mu_*)\leq \frac{\sum\limits_{k\in T}p_k\log{p_k}}{\sum\limits_{k\in T}p_k\log{C_k}}.$$
		Thus, the proof is done.
	\end{proof}

	The H\"{o}lder space is defined as follows:$$ \mathcal{HC}^{\sigma}(J ) := \{g:J \rightarrow \mathbb{R}^M: ~\text{g is H\"{o}lder continuous with exponent}~ \sigma \} .$$
	Note that $(\mathcal{HC}^{\sigma}(J),\|.\|_\mathcal{HC})$ is a Banach space, where $ \|g\|_{\mathcal{HC}}:= \|g\|_{\infty} +[g]_{\sigma}$ and $$[g]_{\sigma} = \sup_{t_1\ne t_2} \frac{\|g(t_1)-g(t_2)\|}{|t_1-t_2|^{\sigma}}$$
	
	
	In the next result, we describe some conditions under which vector-valued FIF  belongs to the H\"older space.
	\begin{theorem}\label{BBVL3}
		Let $ q_k \in  \mathcal{HC}^{\sigma}(J )$ for each $k \in T $. Set $a_{\min}:= \min\{|a_k|: k \in T \}$ and $\alpha_{\max}:= \max\{|\alpha_k|: k \in T \}$. If $  \frac{\alpha_{\max}}{a_{\min}^\sigma}< 1 $, then the fractal function
		$h$ is H\"{o}lder continuous with exponent $\sigma$.
	\end{theorem}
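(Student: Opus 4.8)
The plan is to realise $h$ as the unique fixed point of the Read--Bajraktarevi\'c operator on the H\"older space and to show that this operator preserves $\mathcal{HC}^{\sigma}(J)$. Concretely, set
$$\mathcal{F}_\sigma=\{g\in\mathcal{HC}^\sigma(J):\ g(x_1)=\boldsymbol{y}_1,\ g(x_N)=\boldsymbol{y}_N\},$$
which is a closed subset of the Banach space $(\mathcal{HC}^\sigma(J),\|\cdot\|_{\mathcal{HC}})$ and hence complete, and define $T$ by $(Tg)(t)=\alpha_k\,g(P_k^{-1}(t))+q_k(P_k^{-1}(t))$ for $t\in J_k$. The compatibility relations $q_k(x_1)=\boldsymbol{y}_k-\alpha_k\boldsymbol{y}_1$ and $q_k(x_N)=\boldsymbol{y}_{k+1}-\alpha_k\boldsymbol{y}_N$ guarantee that $Tg$ is well defined and continuous across the knots, and in fact that $(Tg)(x_m)=\boldsymbol{y}_m$ for every $m$; comparing with the self-referential equation shows that $h$ is the unique fixed point of $T$. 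So it suffices to prove that $T$ maps $\mathcal{F}_\sigma$ into itself, since the finiteness of $[h]_\sigma$ is then immediate.

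First I would estimate the H\"older ratio of $Tg$ for two points lying in the same piece $J_k$. Since $|P_k^{-1}(t_1)-P_k^{-1}(t_2)|=|t_1-t_2|/|a_k|$, one obtains
$$\frac{\|(Tg)(t_1)-(Tg)(t_2)\|}{|t_1-t_2|^{\sigma}}\le\frac{|\alpha_k|\,[g]_\sigma+[q_k]_\sigma}{|a_k|^{\sigma}}\le\frac{\alpha_{\max}\,[g]_\sigma+\max_{k\in T}[q_k]_\sigma}{a_{\min}^{\sigma}}.$$
This is the estimate that makes the hypothesis $\alpha_{\max}/a_{\min}^{\sigma}<1$ natural: the coefficient of $[g]_\sigma$ is exactly $\alpha_{\max}/a_{\min}^{\sigma}<1$, and this is what forces $[Tg]_\sigma$ to stay bounded. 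I would then conclude either by iterating $T$ from the piecewise-linear interpolant of the data (which is Lipschitz, hence lies in $\mathcal{F}_\sigma$) and invoking lower semicontinuity of $[\cdot]_\sigma$ under the uniform convergence $T^n g\to h$, or by solving the resulting self-consistent inequality for $[h]_\sigma$ directly.

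The main obstacle is the case of two points $t_1\in J_m$, $t_2\in J_{m+1}$ lying in adjacent pieces on opposite sides of an interior knot $x_{m+1}$, where the formula for $Tg$ changes and the single contraction factor is no longer directly visible. Here I would split at the knot,
$$\|(Tg)(t_1)-(Tg)(t_2)\|\le\|(Tg)(t_1)-\boldsymbol{y}_{m+1}\|+\|\boldsymbol{y}_{m+1}-(Tg)(t_2)\|,$$
use the interpolation identity $(Tg)(x_{m+1})=\boldsymbol{y}_{m+1}$ together with the within-piece bound on each summand, and then control the sum of $\sigma$-powers by the elementary inequality $|t_1-x_{m+1}|^{\sigma}+|x_{m+1}-t_2|^{\sigma}\le 2^{1-\sigma}|t_1-t_2|^{\sigma}$. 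Two points in non-adjacent pieces satisfy $|t_1-t_2|\ge\min_{k}|J_k|>0$, so their ratio is at most $2\|h\|_{\infty}(\min_k|J_k|)^{-\sigma}$, using only that $h$ is bounded on the compact interval $J$. Combining the three regimes yields a finite global bound on $[h]_\sigma$.

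The delicate point I expect to require the most care is precisely the knot-straddling estimate, because the elementary inequality contributes a factor $2^{1-\sigma}$ in front of the genuinely contractive ratio $\alpha_{\max}/a_{\min}^{\sigma}$. To push the argument through cleanly under the stated hypothesis one must verify that this factor does not spoil the bound: in the contraction step the comparison function $\Psi:=Tg_1-Tg_2$ satisfies $\Psi(x_m)=0$ at \emph{every} knot (its knot values depend only on $(g_1-g_2)(x_1)=(g_1-g_2)(x_N)=0$), so the straddling term is governed by the same within-piece factor, while the bounded far-field contribution can be carried separately. I would therefore organise the final estimate so that only the within-piece factor $\alpha_{\max}/a_{\min}^{\sigma}<1$ controls the small-scale behaviour, with the $2^{1-\sigma}$ and the far-field terms absorbed into the additive constant.
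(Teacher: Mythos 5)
Your overall route is the same as the paper's: realise $h$ as the fixed point of the Read--Bajraktarevi\'c operator on $\{g\in\mathcal{HC}^\sigma(J):\ g(x_1)=\boldsymbol{y}_1,\ g(x_N)=\boldsymbol{y}_N\}$ and extract the ratio $\alpha_{\max}/a_{\min}^\sigma$ from the within-piece estimate; the paper performs exactly this computation (it writes $[Sf]_\sigma=\max_{k\in T}\sup_{t_1,t_2\in J_k}(\cdots)$, i.e.\ it only ever tests the seminorm on pairs lying in a common $J_k$) and then invokes the Banach fixed point theorem. You go further by isolating the knot-straddling pairs as the delicate case, and you are right that this is where the real difficulty sits.

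However, your resolution of that case does not close. Splitting at the knot gives, for $\Psi=Tg_1-Tg_2$ and $t_1\in J_m$, $t_2\in J_{m+1}$,
$$\|\Psi(t_1)-\Psi(t_2)\|\le \frac{\alpha_{\max}}{a_{\min}^\sigma}\,[g_1-g_2]_\sigma\bigl(|t_1-x_{m+1}|^\sigma+|x_{m+1}-t_2|^\sigma\bigr)\le 2^{1-\sigma}\,\frac{\alpha_{\max}}{a_{\min}^\sigma}\,[g_1-g_2]_\sigma\,|t_1-t_2|^\sigma,$$
so the factor $2^{1-\sigma}$ lands on the \emph{coefficient} of $[g_1-g_2]_\sigma$, not on an additive constant: the vanishing of $\Psi$ at the knots is what permits the split, but it does not remove this factor, and your claim that ``the straddling term is governed by the same within-piece factor'' is not correct. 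The same obstruction defeats the ``iterate from a Lipschitz seed and use lower semicontinuity'' variant: since $P_k^{-1}$ maps $J_k$ onto all of $J$, the within-piece bound for $Tg$ genuinely requires the \emph{global} seminorm of $g$ on the right-hand side, so the recursion for $\sup_n[T^ng_0]_\sigma$ has coefficient $2^{1-\sigma}\alpha_{\max}/a_{\min}^\sigma$, which the hypothesis does not force below $1$. Thus, as written, neither your argument nor the paper's controls the full H\"older seminorm under the stated assumption alone. A complete proof needs either the strengthened hypothesis $2^{1-\sigma}\alpha_{\max}/a_{\min}^\sigma<1$, or a multi-scale argument: iterate the self-referential equation to obtain $\operatorname{osc}\bigl(h,P_{i_1}\circ\cdots\circ P_{i_n}(J)\bigr)\le C\,|a_{i_1}\cdots a_{i_n}|^\sigma$ for every word (only $\alpha_{\max}/a_{\min}^\sigma<1$ is used here, to sum a geometric series), and then, for arbitrary $t_1<t_2$, descend to the generation at which the cells containing $t_1$ and $t_2$ have length comparable to $|t_1-t_2|$ before splitting at the intervening knot. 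That chaining step is the missing idea.
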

	\begin{proof}
		Let us define $ \mathcal{HC}^{\sigma}_0(J ):= \{ f \in \mathcal{HC}^{\sigma}(J ): f(x_1)= \boldsymbol{y}_1, ~f(x_N)= \boldsymbol{y}_N \}.$ 
		By basic real analysis technique, we may see that $\mathcal{HC}^{\sigma}_0(J )$ is a closed subset of $\mathcal{HC}^{\sigma}(J).$ Since $(\mathcal{HC}^{\sigma}(J),\|.\|_\mathcal{HC})$ is a Banach space, it implies that $\mathcal{HC}^{\sigma}_0(J )$  will be a complete metric space with respect to metric induced by $\|.\|_\mathcal{HC}.$ We define RB operator $S: \mathcal{HC}^{\sigma}_0(J) \rightarrow \mathcal{HC}^{\sigma}_0(J )$ by $$ (Sf)(t)=\alpha_k f(P_k^{-1}(t))+ ~q_k(P_k^{-1}(t))  $$
		$\forall~~ t \in J_k ,$ where $k \in T.$ We shall show that $S$ is well-defined and is a contraction map on $\mathcal{HC}^{\sigma}_0(J)$.
		\begin{equation*}
			\begin{split}
				[Sf]_\sigma   = &\max_{k  \in T} \sup_{t_1 \ne t_2, t_1,t_2  \in J_k} \frac{\|Sf(t_1)-Sf(t_2)\|}{|t_1-t_2|^{\sigma}}\\
				\le&  \max_{k  \in T} \Bigg[ \sup_{t_1 \ne t_2, t_1,t_2 \in J_k} \frac{\|\alpha_k f(P_k^{-1}(t_1))-\alpha_k f(P_k^{-1}(t_2))\|}{|t_1-t_2|^{\sigma}}\\
				& +  \sup_{t_1\ne t_2, t_1,t_2 \in J_k} \frac{ \Big\|q_k(P_k^{-1}(t_1))-q_k(P_k^{-1}(t_2))\Big\|}{|t_1-t_2|^{\sigma}}\Bigg]\\&\leq\frac{\alpha_{\max} [f]_\sigma}{a_{
						\min}^\sigma} +\frac{[q]_\sigma}{a_{\min}^\sigma},
			\end{split}
		\end{equation*}
		where $[q]_{\sigma}= \max\limits_{k \in T} ~~[q_k]_\sigma$. Let $f, g \in \mathcal{HC}^{\sigma}_0(J )$. Then we have
		\begin{equation*}
			\begin{aligned}
				\|Sf -Sg\|_{\mathcal{HC}} &= \|Sf -Sg\|_{\infty} + [Sf-Sg]_{\sigma}\\
				&\le \alpha_{\max} \|f -g\|_{\infty} + \frac{\alpha_{\max} [f-g]_{\sigma}}{a_{\min}^{\sigma}} \\
				&\le \frac{\alpha_{\max}}{a_{\min}^{\sigma}} \| f-g\|_{\mathcal{H}}.
			\end{aligned}
		\end{equation*}
		This implies that $S$ is a well-defined map on $\mathcal{HC}^{\sigma}_0(J )$.    
		Since $\frac{\alpha_{\max}}{a_{\min}^{\sigma}} < 1$, it follows that $S$ is a contraction map on $ \mathcal{HC}^{\sigma}_0(J ).$ Using Banach contraction mapping theorem, $S$ has a unique fixed point $ h \in \mathcal{HC}^{\sigma}_0(J )$. This completes the proof.
	\end{proof}

	In the next theorem, we calculate fractal dimensions of the graph of the vector-valued FIF.
	\begin{theorem}
		Let $\sigma\in (0,1]$ and   $q_{k}\in \mathcal{HC}^\sigma(J)$ such that
		$$ \|q_k(t_1) -q_k(t_2)\| \le C_q |t_1-t_2 |^{\sigma}$$
		for all $t_1,t_2\in J, k\in T$ and $C_q>0$. Let for each $i\in \{1,2,\cdots,M\} $, $q_{k,i}$ and $h_i$ be the $i$th component of $q_k$ and $h$, respectively. Moreover, if we assume that for each $k \in T$, there are constants $C_{k,i}>0$ and $\delta_0>0$ such that for each $t_1\in J$ and $0<\delta \leq \delta_0 $ there exists $t_2\in J$ such that $|t_1-t_2|\leq \delta$  and
		$$|q_{k,i}(t_1)-q_{k,i}(t_2)|\geq C_{k,i}|t_1-t_2|^\sigma~~~\forall~~i\in \{1,2,\cdots,M\}. $$
		Then we have 
		$$1 \le \dim_H\big(G{(h_i)}\big) \le \dim_B\big(G{(h_i)}\big) = 2 - \sigma~~~\forall~~i=\{1,2,\cdots,M\},$$
		provided  $\frac{\alpha_{\max}}{a_{\min}^\sigma}<1$ and $C_{0}a_{\min}^\sigma>\alpha_{\max}a_{\max}^\sigma h_q$ holds, where $C_0=\min\limits_{k\in T}\{\min\limits_{1\leq i\leq M} C_{k,i}\},$ and $a_{\max}=\max\{|a_k| : k\in T\}$. Moreover, 
		$\dim_B(G(h))\geq 2-\sigma.$
	\end{theorem}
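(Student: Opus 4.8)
The plan is to treat the two inequalities separately and to work component-wise, exploiting that each $h_i$ is itself a real-valued fractal function. First I would record that, since $\alpha_k$ is a scalar, the self-referential equation for $h$ decouples into $h_i(t)=\alpha_k\,h_i(P_k^{-1}(t))+q_{k,i}(P_k^{-1}(t))$ for $t\in J_k$, so each component $h_i$ is the real FIF generated by the data $(\alpha_k,q_{k,i})$. The hypothesis $\alpha_{\max}/a_{\min}^\sigma<1$ together with $q_k\in\mathcal{HC}^\sigma(J)$ lets me invoke Theorem \ref{BBVL3} to conclude $h\in\mathcal{HC}^\sigma(J)$, whence each $h_i$ is H\"older with exponent $\sigma$ (as $|h_i(t_1)-h_i(t_2)|\le\|h(t_1)-h(t_2)\|$); here $h_q$ is the bound on the H\"older seminorm $[h]_\sigma$, finite by the fixed-point estimate in the proof of Theorem \ref{BBVL3}. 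The easy bounds are then immediate: the projection $(t,h_i(t))\mapsto t$ is Lipschitz and onto $J$, so $\dim_H(G(h_i))\ge\dim_H(J)=1$, and $\dim_H\le\underline{\dim}_B\le\overline{\dim}_B$ always holds; the upper bound $\overline{\dim}_B(G(h_i))\le 2-\sigma$ is exactly the H\"older-graph theorem quoted in Section \ref{se2}.

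The substance is the matching lower bound $\underline{\dim}_B(G(h_i))\ge 2-\sigma$, for which I would use the standard mesh-oscillation estimate: writing $R_{h_i}(I)=\sup_{s,t\in I}|h_i(s)-h_i(t)|$ for the oscillation over $I$ and letting $N_\delta$ count the $\delta$-mesh squares meeting $G(h_i)$, one has $N_\delta\ge\delta^{-1}\sum_{j}R_{h_i}[j\delta,(j+1)\delta]$. Hence it suffices to produce a constant $c>0$, independent of $\delta$ and $j$, with $R_{h_i}(I)\ge c\,\delta^\sigma$ for every mesh interval $I$ of length $\delta$ at scales below some $\delta_1$; summing over the $\asymp\delta^{-1}$ intervals gives $N_\delta\gtrsim\delta^{\sigma-2}$ and therefore $\underline{\dim}_B(G(h_i))\ge 2-\sigma$.

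To get the oscillation bound I would fix $I\subseteq J_k$, write $I=P_k(I')$ with $|I'|=\delta/|a_k|$, and use the self-referential equation in the form $h_i(P_k(u_1))-h_i(P_k(u_2))=\alpha_k\big(h_i(u_1)-h_i(u_2)\big)+\big(q_{k,i}(u_1)-q_{k,i}(u_2)\big)$ for $u_1,u_2\in I'$. Choosing $u_1,u_2$ so that the lower oscillation hypothesis on $q_{k,i}$ is realized at the full scale $|I'|$ gives $|q_{k,i}(u_1)-q_{k,i}(u_2)|\ge C_{k,i}(\delta/|a_k|)^\sigma\ge C_0\,\delta^\sigma/a_{\max}^\sigma$, while H\"older continuity of $h_i$ bounds $|h_i(u_1)-h_i(u_2)|\le h_q(\delta/|a_k|)^\sigma\le h_q\,\delta^\sigma/a_{\min}^\sigma$. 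The reverse triangle inequality then yields
\begin{equation*}
R_{h_i}(I)\ \ge\ \Big(\frac{C_0}{a_{\max}^\sigma}-\frac{\alpha_{\max}h_q}{a_{\min}^\sigma}\Big)\delta^\sigma,
\end{equation*}
and the coefficient is strictly positive precisely because $C_0\,a_{\min}^\sigma>\alpha_{\max}a_{\max}^\sigma h_q$, the standing hypothesis; this fixes the admissible $c$. Combining the two bounds gives $\dim_B(G(h_i))=2-\sigma$ and the full chain $1\le\dim_H(G(h_i))\le\dim_B(G(h_i))=2-\sigma$. Finally, the asserted bound $\dim_B(G(h))\ge 2-\sigma$ follows at once from the earlier Proposition, since $\underline{\dim}_B(G(h))\ge\max_i\underline{\dim}_B(G(h_i))=2-\sigma$.

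The main obstacle I anticipate is the oscillation step: one must extract from the hypothesis a lower bound on the oscillation of $q_{k,i}$ at the \emph{full} scale $|I'|=\delta/|a_k|$ (not merely at some smaller separation), and must track the two contraction ratios asymmetrically --- bounding $|a_k|\le a_{\max}$ in the $q_{k,i}$-term but $|a_k|\ge a_{\min}$ in the $h_i$-term --- since it is exactly this asymmetry that produces the ratio $a_{\max}^\sigma/a_{\min}^\sigma$ appearing in the hypothesis. A secondary technicality is that finitely many mesh intervals straddle the junction points $x_k$ and need not lie in a single $J_k$; these are harmless for the box count, as replacing $I$ by a bounded enlargement changes only the constant $c$.
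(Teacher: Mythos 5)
Your proof follows essentially the same route as the paper's: H\"older continuity of $h$ via Theorem \ref{BBVL3}, the standard mesh-oscillation count for the upper box bound, and the lower bound obtained by applying the reverse triangle inequality to the componentwise self-referential equation with the asymmetric estimates $|a_k|\le a_{\max}$ in the $q_{k,i}$-term and $|a_k|\ge a_{\min}$ in the $h_i$-term, producing the same constant $R_0=C_0a_{\max}^{-\sigma}-\alpha_{\max}a_{\min}^{-\sigma}h_q$ and the count $N_\delta\gtrsim\delta^{\sigma-2}$. The two technicalities you flag at the end (realizing the oscillation of $q_{k,i}$ at the full scale $\delta/|a_k|$ rather than at some smaller separation, and the mesh intervals straddling the nodes $x_k$) are passed over silently in the paper's proof in exactly the same way, so your write-up is, if anything, slightly more explicit about where the argument needs care.
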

	\begin{proof}
		Since $\frac{\alpha_{\max}}{a_{\min}^\sigma}<1$, by Theorem \ref{BBVL3}, we deduce that the fractal function $h\in \mathcal{HC}^{\sigma}(J).$ For $t_1,t_2\in J$, we have
		$$ |h_i(t_1)-h_i(t_2)|\leq \|h(t_1)-h(t_2)\|\leq h_{q} |t_1-t_2|^\sigma, $$  holds for each $i\in \{1,2,\cdots,M\}$ and for some $h_q>0.$ First, we will prove that 
		$$\overline{\dim}_{B}G(h_i)\leq 2-\sigma~~\text{for all}~~i\in \{1,2,\cdots,M\}.$$
		Let $m$ be the smallest natural number greater than or equal to $\frac{1}{\delta}$ and $N_{\delta}(G(h_i))$ be the smallest number of $\delta$-mess that can intersect with $G(h_i)$. Then we have
		\begin{align*}
			N_{\delta}(G(h_i))&\leq 2m+\delta^{-1}\sum\limits_{j=0}^{m-1}R_{h_i}[j\delta,(j+1)\delta]\\
			&\leq 2(\delta^{-1}+1)+ \sum\limits_{j=0}^{m-1}h_{q}\delta^{\sigma-1}\\
			&\leq \delta^{\sigma-2} (4+2mh_{q}).
		\end{align*}
		From the above, we conclude that 
		\begin{equation}\label{eq3.2}
			\overline{\dim}_B\big(G(h_i)\big) =\varlimsup_{\delta \rightarrow 0} \frac{\log N_{\delta}(G(h_i))}{- \log \delta}\le 2- \sigma~~~~~\forall~~~ i=1,2,\cdots,M.
		\end{equation}
		Next, we will give a lower bound of lower box dimension of the graph of $h_i$ for each $i\in \{1,2,\cdots,M\}.$ Self-referential equation yields that 
		$$h_i(t)=\alpha_kh_i(P_k^{-1}(t))+q_{k,i}(P_k^{-1}(t))~~~\forall~~ i\in\{1,2,\cdots,M\}$$ for each $t\in J_k$ and $k\in T$ .
		Let $\delta_1=\delta_{0}a_{\min}$ . Let $\delta\leq \delta_1$ and $t_1,t_2\in J_k$ such that $|t_1-t_2|\leq \delta$. Then we have
		\begin{align*}
			|h_i(t_1)-h_i(t_2)|&=|\alpha_kh_i(P_K^{-1}(t_1))+q_{k,i}(P_k^{-1}(t_1))-\alpha_kh_i(P_K^{-1}(t_2))-q_{k,i}(P_k^{-1}(t_2))| \\
			&\geq |q_{k,i}(P_k^{-1}(t_1))-q_{k,i}(P_k^{-1}(t_2))|-|\alpha_kh_i(P_K^{-1}(t_1))-\alpha_kh_i(P_K^{-1}(t_2))|\\
			&\geq C_{0} a_{\max}^{-\sigma}|t_1-t_2|^\sigma -\alpha_{\max} a_{\min}^{-\sigma} h_{q} |t_1-t_2|^\sigma \\
			&=(C_{0} a_{\max}^{-\sigma}-\alpha_{\max} a_{\min}^{-\sigma} h_{q}) |t_1-t_2|^\sigma\\
			&=R_0 |t_1-t_2|^\sigma,
		\end{align*}
		where $R_0=C_{0} a_{\max}^{-\sigma}-\alpha_{\max} a_{\min}^{-\sigma}$. By our assumption, it is clear that $R_0>0.$ Let $\delta \leq \delta_1$. Then we have 
		$$  N_\delta(G(h_i))\geq
		\sum_{j=0}^{m-1}  { \delta^{-1}} R_{h_i}[j\delta,(j+1)\delta]  \ge  \sum_{j=0}^{m-1 }  {R_0  \delta^{ \sigma -1}}
		\geq  R_0 \delta^{ \sigma-2}.$$
		
		By the above, we conclude that
		\begin{equation}\label{e3.3}
			\underline{\dim}_B\big(G(h_i))\big) =\varliminf_{\delta \rightarrow 0}\frac{ \log\Big( N_{\delta}(G(h_i))\Big)}{- \log (\delta)}
			\geq
			2- \sigma
			~~~~~~\forall~~i\in \{1,2,\cdots,M\}.
		\end{equation}
		Using inequalities \eqref{eq3.2}, \eqref{e3.3} and Lemma \ref{new222}, we get our assertion.
	\end{proof}

	\begin{definition}
		A vector-valued function $ g:J \rightarrow \mathbb{R}^M$ is said to be of bounded variation if the total variation $V(g,J) $ of $g$, defined by $$V(g,J)= \sup_{\Delta=(t_0,t_1, \dots,t_l) ~~\text{partition of } J}~ \sum_{j=0}^{l-1} \|g(t_j)-g(t_{j+1})\|,$$ is finite.
		The space of all vector-valued bounded variation functions on $J,$ denoted by $\mathcal{BV}(J,\mathbb{R}^M),$ forms a Banach space when endowed with the norm
		$\|g\|_{\mathcal{BV}}:= \|g(x_1)\|+ V(g,J).$
		
	\end{definition}
	In the following theorem, we find out some conditions under which vector-valued FIF becomes a bounded variation function and the Hausdorff dimension and the box dimension of vector-valued FIF is $1.$
	
	\begin{theorem}\label{th3.14}
		Let $q_k \in \mathcal{BV}(J,\mathbb{R}^M)~~\text{for all}~~k\in T$. Let $\alpha_{\max}=\max\{|\alpha_k| : k\in T\}$. If $\alpha_{\max}< \frac{1}{(N-1)},$ then $h \in \mathcal{BV}(J,\mathbb{R}^M)$. Moreover, $\dim_H(G(h))=\dim_B(G(h))=1 ,$ and either $\dim_H(\mu_*)=1$ or the $1$-dimensional Hausdorff measure $(\mathcal{H}^1)$ is absolutely continuous with respect to $\mu_*.$
	\end{theorem}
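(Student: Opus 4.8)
The plan is to dispatch the three assertions in turn, by a Read--Bajraktarević operator for the bounded variation claim, a rectifiability argument for the dimension claim, and a Lebesgue decomposition for the measure dichotomy.

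For $h\in\mathcal{BV}(J,\mathbb{R}^M)$ I would set $\mathcal{BV}_0(J,\mathbb{R}^M):=\{f\in\mathcal{BV}(J,\mathbb{R}^M): f(x_1)=\boldsymbol{y}_1,\ f(x_N)=\boldsymbol{y}_N\}$. Since $f\mapsto f(x_1)$ and $f\mapsto f(x_N)$ are continuous for $\|\cdot\|_{\mathcal{BV}}$, this is a closed subset of the Banach space $\mathcal{BV}(J,\mathbb{R}^M)$, hence complete. Define $(Sf)(t)=\alpha_k f(P_k^{-1}(t))+q_k(P_k^{-1}(t))$ for $t\in J_k$. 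The endpoint conditions on $q_k$ make the two one-sided values at each interior knot $x_{k+1}$ agree (both equal $\boldsymbol{y}_{k+1}$) and give $(Sf)(x_1)=\boldsymbol{y}_1$, $(Sf)(x_N)=\boldsymbol{y}_N$, while $V(Sf,J)\le\sum_{k\in T}\bigl(|\alpha_k|V(f,J)+V(q_k,J)\bigr)<\infty$, so $S$ maps $\mathcal{BV}_0$ into itself. On each $J_k$ the affine terms cancel, $(Sf-Sg)|_{J_k}=\alpha_k\,(f-g)\circ P_k^{-1}$, and since total variation is invariant under the monotone reparametrisation $P_k^{-1}\colon J_k\to J$,
$$\|Sf-Sg\|_{\mathcal{BV}}=V(Sf-Sg,J)=\sum_{k\in T}|\alpha_k|\,V(f-g,J)\le (N-1)\,\alpha_{\max}\,\|f-g\|_{\mathcal{BV}}.$$
Because $\alpha_{\max}<1/(N-1)$, the map $S$ is a contraction, and its unique fixed point is exactly the FIF $h$; hence $h\in\mathcal{BV}(J,\mathbb{R}^M)$.

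For the dimension, the point is that the graph map $\Gamma(t)=(t,h(t))$ is continuous (as $h$ is a FIF) and of bounded variation, with $V(\Gamma,J)\le(x_N-x_1)+V(h,J)<\infty$, so $G(h)$ is a rectifiable curve of finite length $L$. Splitting it into $\lceil L/\delta\rceil$ subarcs of length at most $\delta$, each of diameter at most $\delta$, gives $N_\delta(G(h))\le L/\delta+1$, so $\overline{\dim}_B G(h)\le 1$; meanwhile the coordinate projection $(t,\boldsymbol{z})\mapsto t$ is $1$-Lipschitz from $G(h)$ onto $[x_1,x_N]$, so $\dim_H G(h)\ge 1$. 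The chain $1\le\dim_H\le\underline{\dim}_B\le\overline{\dim}_B\le 1$ then forces $\dim_H(G(h))=\dim_B(G(h))=1$. In particular $0<\mathcal{H}^1(G(h))<\infty$, and since Theorem \ref{th3.9} gives $\mu_*(G(h))=1$, we also record $\dim_H(\mu_*)\le 1$.

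The dichotomy is the delicate part, and I expect it to be the main obstacle. Set $\lambda:=\mathcal{H}^1|_{G(h)}$, a finite measure which, being carried by a rectifiable curve, is exact dimensional of dimension $1$ with $\lambda(B(x,r))/(2r)\to 1$ for $\lambda$-a.e. $x$. Take the Lebesgue decomposition $\lambda=\phi\,\mu_*+\rho$ with $\phi\in L^1(\mu_*)$ and $\rho\perp\mu_*$. If $\rho=0$ then $\lambda\ll\mu_*$, which is the assertion $\mathcal{H}^1\ll\mu_*$, placing us in the second alternative. If $\rho\ne 0$, the goal is to force $\dim_H(\mu_*)=1$: by the Besicovitch differentiation theorem the limit $\lim_{r\to0}\mu_*(B(x,r))/\lambda(B(x,r))$ exists, finite and positive, for $\mu_*$-almost every $x$ on the set carrying the absolutely continuous part, and combining this with the density-$1$ property of $\lambda$ one would deduce $\underline{\dim}_{loc}\mu_*(x)\ge 1$ for $\mu_*$-a.e. $x$, hence $\dim_H(\mu_*)\ge 1$ and so $\dim_H(\mu_*)=1$ by the previous paragraph. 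The genuinely hard step is controlling the interaction of $\mu_*$ with $\lambda$ at points where the cross density degenerates (the analogue of $\phi(x)=0$ on a set of positive $\mu_*$-measure): this is precisely where one must exclude a singular, self-similar-type measure concentrated on $G(h)$, and it is the estimate that has to be argued with care, presumably invoking a non-degeneracy of $h$ so that $\mu_*$ cannot carry more concentration than the curve permits.
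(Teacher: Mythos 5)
Your first two parts are sound. The bounded variation argument is the same Read--Bajraktarevi\'{c} contraction on $\mathcal{BV}_0(J,\mathbb{R}^M)$ that the paper uses, with the same contraction factor $(N-1)\alpha_{\max}$; the only point worth making explicit is that the fixed point you obtain in $\mathcal{BV}_0$ really is the FIF $h$, which follows since $\|f\|_{\infty}\le \|f(x_1)\|+V(f,J)=\|f\|_{\mathcal{BV}}$, so $\mathcal{C}(J,\mathbb{R}^M)\cap\mathcal{BV}_0$ is a closed $S$-invariant subset and the fixed point is continuous. For the dimension claim you actually go further than the paper, which simply asserts $\dim_H(G(h))=\dim_B(G(h))=1$ with no argument: your rectifiable-curve covering bound together with the $1$-Lipschitz projection onto $[x_1,x_N]$ is a correct, self-contained justification, and it also yields $0<\mathcal{H}^1(G(h))<\infty$, which is needed for the third claim to have content.

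The third claim is where the proposal has a genuine gap, and you flag it yourself. The branch to be handled is: assuming $\mathcal{H}^1|_{G(h)}$ is \emph{not} absolutely continuous with respect to $\mu_*$, deduce $\dim_H(\mu_*)=1$, i.e.\ that every Borel set of positive $\mu_*$-measure has Hausdorff dimension at least $1$. Your Besicovitch differentiation step controls $\mu_*(B(x,r))/\lambda(B(x,r))$ only where $\mu_*$ has finite density with respect to $\lambda=\mathcal{H}^1|_{G(h)}$; on a set carrying a part of $\mu_*$ singular with respect to $\lambda$ the ratio blows up and no lower bound on $\underline{\dim}_{loc}\mu_*$ follows --- and such concentration is exactly what cannot be excluded a priori (full support of $\mu_*$ does not prevent $\dim_H(\mu_*)<1$ when the weights $p_k$ are far from the natural ones). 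Moreover, your hypothesis for this branch, namely that $\lambda$ has a nontrivial component singular to $\mu_*$, gives no control whatsoever on how concentrated $\mu_*$ itself is, so the Lebesgue decomposition is pulling in the wrong direction. For the record, the paper's own treatment of this step is a single sentence (from the existence of $A$ with $\mu_*(A)=0$ and $\mathcal{H}^1(A)>0$ it directly infers $\dim_H(\mu_*)\ge 1$) and supplies no more justification than you do; but as a proof attempt your third part is an announced programme rather than an argument, and the estimate you defer is precisely the one that has to be supplied.
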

	\begin{proof}
		First, we define a set $\mathcal{BV}_0(J,\mathbb{R}^M)=\{f\in \mathcal{BV}(J,\mathbb{R}^M): f(x_1)=\boldsymbol{y}_1, f(x_N) =\boldsymbol{y}_N\}.$ It is easily seen that  $\mathcal{BV}_0(J,\mathbb{R}^M)$ is a closed subset of $\mathcal{BV}(J,\mathbb{R}^M)$. Since $\mathcal{BV}(J,\mathbb{R}^M)$ is a Banach space, $\mathcal{BV}_0(J,\mathbb{R}^M)$ is a complete metric space with respect to the metric induced by norm $\|f\|_{\mathcal{BV}}:= \|f(x_1)\|+ V(f,J).$ We define an operator $S: \mathcal{BV}_0(J,\mathbb{R}^M)\to \mathcal{BV}_0(J,\mathbb{R}^M) $ by
		$$ (Sf)(t)=\alpha_k f(P_k^{-1}(t))+ ~q_k(P_k^{-1}(t))  $$
		$\forall~~ t \in J_k $ and $k \in T.$ Consider a partition $\Delta=\{t_0,t_1,\cdots,t_l\}$ of $J_k$, where $k\in T$ and $l\in \mathbb{N}.$ For $f,g\in \mathcal{BV}_0(J,\mathbb{R}^M)$, we have
		\begin{align*}
			\|(Sf-Sg)(t_j)-(Sf-Sg)(t_{j+1})\|&=\|\alpha_k (f-g)(P_k^{-1}(t_j))-\alpha_k (f-g)(P_k^{-1}(t_{j+1}))\|\\
			&=|\alpha_k|\|(f-g)(P_k^{-1}(t_j))- (f-g)(P_k^{-1}(t_{j+1}))\|\\
			&\leq \alpha_{\max} \|(f-g)(P_k^{-1}(t_j))- (f-g)(P_k^{-1}(t_{j+1}))\|.
		\end{align*}   
		In the above, by taking sum from $j=0$ to $j=l-1$, we get
		\begin{align*}
			\sum\limits_{j=0}^{l-1}\|(Sf-Sg)&(t_j)-(Sf-Sg)(t_{j+1})\|\\&\leq \alpha_{\max} \sum\limits_{j=0}^{l-1}\|(f-g)(P_k^{-1}(t_j))- (f-g)(P_k^{-1}(t_{j+1}))\|\\
			& \leq \alpha_{\max} \|f-g\|_{\mathcal{BV}}.
		\end{align*}  
		The above inequality is true for any partition $\Delta$ of $J_k$  and $k\in T$. Thus, we deduce that
		$$\|Sf-Sg\|_{\mathcal{BV}}\leq \alpha_{\max}(N-1)\|f-g\|_{\mathcal{BV}}.$$
		So, $S$ is a well defined operator on $\mathcal{BV}_0(J,\mathbb{R}^M)$ and by our hypothesis, it is clear that $\alpha_{\max}(N-1)<1.$ Therefore, $S$ is a contraction map on the complete metric space $\mathcal{BV}_0(J,\mathbb{R}^M)$. By virtue of the Banach contraction mapping theorem, $S$ has a unique fixed point $h\in \mathcal{BV}_0(J,\mathbb{R}^M).$
		\par
		Suppose $\mathcal{H}^1$ is not absolutely continuous with respect  to $\mu_*$. Then there exists a set $A$ such that $ \mu_*(A)=0$ but $\mathcal{H}^1(A)>0.$ This yields that $\dim_H(\mu_*) \ge 1.$ Since $\dim_H(\mu_*) \le \dim_H(G(h))=1,$ we have $\dim_H(\mu_*) =1.$
		This completes the proof.   
	\end{proof}
	
	Next,  we define  the  absolutely continuous function space for vector-valued functions.
	The space of all absolutely continuous vector-valued functions on $J$ is denoted by $\mathcal{AC}(J,\mathbb{R}^M)$.
	We define a norm on $\mathcal{AC}(J,\mathbb{R}^M)$ as follows:
	$$\|f\|_{\mathcal{AC}}=\|f(x_1)\| +\int_{x_1}^{x_N}\|f'(x)\| dx,$$
	where $f\in \mathcal{AC}(J,\mathbb{R}^M)$. Then $\mathcal{AC}(J,\mathbb{R}^M)$ forms a Banach space under the norm $\|\cdot\|_{\mathcal{AC}}$.
	
	In the upcoming result, we obtain some conditions under which vector-valued FIF  belongs to the absolutely continuous function space, and also compute fractal dimension of the graph of vector-valued FIF.
	\begin{theorem}
		Let $ q_k \in \mathcal{AC}(J,\mathbb{R}^M)$ for each $k \in T $. Set $a_{\min}:= \min\{|a_k|: k \in T \}$ and $\alpha_{\max}:= \max\{|\alpha_k|: k \in T \}$. If $  {\alpha_{\max}}< \frac{a_{\min}}{(N-1)} $, then the fractal function
		$h$ is also in $\mathcal{AC}(J,\mathbb{R}^M)$. Moreover, $\dim_H(G(h))=\dim_B(G(h))=1 .$
	\end{theorem}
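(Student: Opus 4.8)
The plan is to mirror the structure of the bounded variation result in Theorem \ref{th3.14}, since the statement has exactly the same flavor: first establish membership in the relevant function space via a Banach fixed point argument, then deduce the dimension from that membership. First I would define the closed affine subspace $\mathcal{AC}_0(J,\mathbb{R}^M)=\{f\in \mathcal{AC}(J,\mathbb{R}^M): f(x_1)=\boldsymbol{y}_1,\ f(x_N)=\boldsymbol{y}_N\}$ and verify it is a complete metric space under the metric induced by $\|\cdot\|_{\mathcal{AC}}$, which follows because it is a closed subset of the Banach space $\mathcal{AC}(J,\mathbb{R}^M)$. Then I would introduce the Read--Bajraktarevi\'c operator
$$ (Sf)(t)=\alpha_k f(P_k^{-1}(t))+q_k(P_k^{-1}(t)),\qquad t\in J_k,\ k\in T, $$
and show $S$ maps $\mathcal{AC}_0(J,\mathbb{R}^M)$ into itself: on each subinterval $J_k$ the map $Sf$ is a composition and scalar combination of absolutely continuous functions, and the interpolation/join conditions $F_k(x_1,\boldsymbol{y}_1)=\boldsymbol{y}_k$, $F_k(x_N,\boldsymbol{y}_N)=\boldsymbol{y}_{k+1}$ guarantee matching of endpoint values so that $Sf$ is absolutely continuous across the whole of $J$.

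The core computation is the contractivity estimate. By the chain rule, on each $J_k$ we have $(Sf)'(t)=\frac{\alpha_k}{a_k} f'(P_k^{-1}(t))+\frac{1}{a_k}q_k'(P_k^{-1}(t))$, where I use $P_k(t)=a_kt+d_k$ so $P_k^{-1}$ has derivative $1/a_k$. For $f,g\in \mathcal{AC}_0(J,\mathbb{R}^M)$ the difference $Sf-Sg$ has derivative $\frac{\alpha_k}{a_k}(f'-g')(P_k^{-1}(t))$ on $J_k$, and a change of variables $u=P_k^{-1}(t)$ (contributing a Jacobian factor $a_k$) gives
$$ \int_{J_k}\|(Sf-Sg)'(t)\|\,dt=|\alpha_k|\int_{x_1}^{x_N}\|(f'-g')(u)\|\,du\le \alpha_{\max}\int_{x_1}^{x_N}\|(f'-g')(u)\|\,du. $$
Summing over the $N-1$ subintervals and noting $(Sf-Sg)(x_1)=\boldsymbol{0}$, I obtain
$$ \|Sf-Sg\|_{\mathcal{AC}}\le (N-1)\,\alpha_{\max}\,\|f-g\|_{\mathcal{AC}}. $$
The hypothesis $\alpha_{\max}<\frac{a_{\min}}{N-1}$ together with $a_{\min}\le 1$ (each $J_k$ is shorter than $J$, so $|a_k|<1$) forces $(N-1)\alpha_{\max}<1$, so $S$ is a contraction. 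By the Banach contraction mapping theorem $S$ has a unique fixed point $h\in \mathcal{AC}_0(J,\mathbb{R}^M)$, which by self-referentiality is precisely the FIF; hence $h\in\mathcal{AC}(J,\mathbb{R}^M)$.

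For the dimension conclusion I would invoke the standard fact that an absolutely continuous function is of bounded variation, so each component $h_i$ is of bounded variation, and the graph of a continuous function of bounded variation has box dimension $1$ (the rectifiability/Lipschitz-in-measure argument, or directly the covering count $N_\delta(G(h_i))\le C\delta^{-1}$ coming from $\sum_j R_{h_i}[j\delta,(j+1)\delta]\le V(h_i,J)<\infty$). Since $\dim_H\le \overline{\dim}_B$ and the graph of any non-constant continuous function has dimension at least $1$, this pins the box dimension of each $G(h_i)$ to $1$, and applying the vector-valued covering bound (or Proposition \ref{prop3.3} style reasoning) yields $\dim_H(G(h))=\dim_B(G(h))=1$. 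The main obstacle I anticipate is the bookkeeping in the derivative/change-of-variables step: one must be careful that the Jacobian factor $a_k$ from $du=\frac{1}{a_k}\,dt$ exactly cancels the extra $1/a_k$ appearing in $(Sf)'$, leaving a clean factor of $|\alpha_k|$ with no residual $a_k$ dependence, and that the endpoint-matching conditions genuinely produce an absolutely continuous (not merely piecewise absolutely continuous) function on all of $J$.
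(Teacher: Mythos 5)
Your proposal follows the same route as the paper, which for this theorem simply says ``argue as in Theorem \ref{th3.14}'': define $\mathcal{AC}_0(J,\mathbb{R}^M)$, run the Read--Bajraktarevi\'c operator through the Banach fixed point theorem, and then read off the dimension. Your contraction estimate is correct and carefully done; note that because the Jacobian $a_k$ from the change of variables exactly cancels the $1/a_k$ in $(Sf)'$, you actually only need $(N-1)\alpha_{\max}<1$, so the stated hypothesis $\alpha_{\max}<a_{\min}/(N-1)$ is (harmlessly) stronger than what your computation requires. The one step to tighten is the dimension conclusion: knowing $\overline{\dim}_B(G(h_i))=1$ for every component does \emph{not} by itself bound $\overline{\dim}_B(G(h))$ from above --- the paper's own Peano-curve remark (Remark \ref{new9987}) is exactly a counterexample to that kind of componentwise reasoning, and Proposition \ref{prop3.3} needs the other components to be Lipschitz, which is not available here. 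The correct (and easy) fix, which you already mention in passing, is to use that $h$ itself lies in $\mathcal{BV}(J,\mathbb{R}^M)$, so $t\mapsto(t,h(t))$ is a rectifiable curve in $\mathbb{R}^{1+M}$ of length at most $|J|+V(h,J)$, whence $N_\delta(G(h))\le C\delta^{-1}$ and $1\le\dim_H(G(h))\le\overline{\dim}_B(G(h))\le 1$ directly for the vector-valued graph.
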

	\begin{proof}
		One can easily prove this theorem by using the similar arguments as in the proof of Theorem \ref{th3.14}.
	\end{proof}
	
	Next, for $\alpha\geq1$, we define a function space as follows:
	$$V_\alpha(J,\mathbb{R}^M)= \{f\in C(J,\mathbb{R}^M): \|f\|_{\alpha}<\infty\}.$$ Then, by Lemma 3.1  in \cite{Fal&Far}, $(V_\alpha(J,\mathbb{R}^M),\|.\|_\alpha)$ is a complete metric space, where
	$$\|f\|_{\alpha}=\|f\|_{\infty}+\sup\limits_{n\in \mathbb{N}}\frac{\sum_{|O|=2^{-n}}R_f{(O)}}{2^{n{(\alpha-1)}}},$$ 
	where $R_f(O)=\sup\limits_{t_1,t_2\in O}\|f(t_1)-f(t_2)\|$ for $O\subseteq J.$
	\begin{lemma} \label{RFD}
		If $|P_k(J)|=\frac{1}{2^{r_k}}$ for some $r_k\in \mathbb{N}$ with $\sum\limits_{k\in T}\frac{1}{2^{r_k}}=1,$ then for $n\geq \max\limits_{k\in T}\{r_k\}, $ we have 
		$$\sum_{|O|=2^{-n}}R_f{(O)}=\sum\limits_{k\in T}\sum_{|O|=2^{-n}, O\subset P_k(J)}R_f{(O)}$$
	\end{lemma}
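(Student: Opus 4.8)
The plan is to show that, under the stated hypotheses, the dyadic partition of $J$ into intervals of length $2^{-n}$ refines the partition $\{P_k(J)=J_k : k\in T\}$; once this is established, the claimed identity is merely a regrouping of the terms of the left-hand sum according to which block $P_k(J)$ each dyadic interval lies in. The single fact that makes everything work is that, as soon as $n\ge \max_{k\in T} r_k$, every breakpoint $x_k$ sits on the dyadic grid of resolution $2^{-n}$.

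First I would record the positions of the breakpoints. Since $|P_k(J)|=|J_k|=x_{k+1}-x_k=2^{-r_k}$, summing gives $x_{k+1}-x_1=\sum_{j=1}^{k}2^{-r_j}$ for each $k\in T$, and the hypothesis $\sum_{k\in T}2^{-r_k}=1$ is just the statement $|J|=1$. For $n\ge \max_{k\in T} r_k$ each summand can be written $2^{-r_j}=2^{\,n-r_j}\,2^{-n}$ with $n-r_j\ge 0$, so $x_{k+1}-x_1$ is an integer multiple of $2^{-n}$. Hence every endpoint of every $J_k=P_k(J)$ coincides with a node of the dyadic grid $\{x_1+j\,2^{-n}\}$.

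With the breakpoints aligned to the grid, each dyadic interval $O$ of length $2^{-n}$ has its interior free of breakpoints, so $O$ is contained in exactly one block $P_k(J)$; conversely, the dyadic intervals contained in a fixed $P_k(J)$ tile it exactly, and no interval is shared between two different blocks or left out (an interval whose right endpoint is $x_{k+1}$ lies in $P_k(J)$, while the adjacent interval whose left endpoint is $x_{k+1}$ lies in $P_{k+1}(J)$). Consequently, the collection of all length-$2^{-n}$ dyadic intervals of $J$ is the disjoint union, over $k\in T$, of those contained in $P_k(J)$, and splitting $\sum_{|O|=2^{-n}}R_f(O)$ along this disjoint union yields the asserted equality. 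The only point requiring care — and the step I would treat as the crux — is verifying the grid alignment of the breakpoints, since this is precisely what prevents a dyadic interval from straddling a boundary $x_{k+1}$ and thereby being counted in neither block or in both.
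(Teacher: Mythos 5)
Your argument is correct: the key observation that for $n\ge\max_{k\in T}r_k$ every breakpoint $x_{k+1}=x_1+\sum_{j\le k}2^{-r_j}$ lies on the grid of resolution $2^{-n}$, so that the length-$2^{-n}$ intervals refine the partition $\{P_k(J)\}$ and the sum merely regroups, is exactly the intended (and essentially only) proof. The paper itself omits the argument entirely ("One can easily prove this lemma"), and your write-up supplies precisely the details being taken for granted.
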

	\begin{proof} One can easily prove this lemma.
		
	\end{proof}
	\begin{theorem}\label{hinV}
		Let $q_k\in V_\alpha(J,\mathbb{R}^M) ~~\forall~~k\in T$ and let $|P_k(J)|=\frac{1}{2^{r_k}}$ for some $r_k\in \mathbb{N}$ with $\sum\limits_{k\in T}\frac{1}{2^{r_k}}=1.$ Then 
		$h\in V_\alpha(J,\mathbb{R}^M),$ provided $\sum\limits_{k\in T}|\alpha_k|<1.$
	\end{theorem}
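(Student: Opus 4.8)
The plan is to realize $h$ as the unique fixed point of a Read--Bajraktarević (RB) operator on a complete subspace of $V_\alpha(J,\mathbb{R}^M)$, in the same spirit as the proofs of Theorems \ref{BBVL3} and \ref{th3.14}. First I would set $V_\alpha^0(J,\mathbb{R}^M):=\{f\in V_\alpha(J,\mathbb{R}^M): f(x_1)=\boldsymbol{y}_1,\ f(x_N)=\boldsymbol{y}_N\}$ and check that it is a closed subset of the complete space $(V_\alpha(J,\mathbb{R}^M),\|\cdot\|_\alpha)$, hence itself complete. On it I define $S$ by $(Sf)(t)=\alpha_k f(P_k^{-1}(t))+q_k(P_k^{-1}(t))$ for $t\in J_k$, $k\in T$; the endpoint constraints $q_k(x_1)=\boldsymbol{y}_k-\alpha_k\boldsymbol{y}_1$ and $q_k(x_N)=\boldsymbol{y}_{k+1}-\alpha_k\boldsymbol{y}_N$ guarantee that $Sf$ is continuous across the nodes $x_k$ and again fixes the endpoint values, so $S$ maps $V_\alpha^0$ into $C(J,\mathbb{R}^M)$. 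Both the well-definedness $Sf\in V_\alpha$ and the contraction will follow from a single oscillation estimate.

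The heart of the argument is to control the scale sums $\sum_{|O|=2^{-n}}R_{Sf-Sg}(O)$ via Lemma \ref{RFD}. For $f,g\in V_\alpha^0$ the difference satisfies $(Sf-Sg)(t)=\alpha_k(f-g)(P_k^{-1}(t))$ on $J_k$, so $R_{Sf-Sg}(O)=|\alpha_k|\,R_{f-g}(P_k^{-1}(O))$ whenever $O\subset P_k(J)$. For $n\geq\max_{k\in T}r_k$ every dyadic interval of length $2^{-n}$ lies inside a single $P_k(J)$, so Lemma \ref{RFD} applies, and the substitution $O'=P_k^{-1}(O)$ (which carries a length-$2^{-n}$ interval in $P_k(J)$ to a length-$2^{-(n-r_k)}$ interval in $J$) turns the sum over $P_k(J)$ into $|\alpha_k|\sum_{|O'|=2^{-(n-r_k)}}R_{f-g}(O')$. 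Dividing by $2^{n(\alpha-1)}$ and writing $2^{n(\alpha-1)}=2^{r_k(\alpha-1)}2^{(n-r_k)(\alpha-1)}$ produces the factor $|\alpha_k|2^{-r_k(\alpha-1)}\leq|\alpha_k|$ (using $\alpha\geq 1$), and summing over $k$ gives the contraction ratio $\sum_{k\in T}|\alpha_k|2^{-r_k(\alpha-1)}\leq\sum_{k\in T}|\alpha_k|<1$. Combined with the elementary bound $\|Sf-Sg\|_\infty\leq\alpha_{\max}\|f-g\|_\infty$, this yields $\|Sf-Sg\|_\alpha\leq c\|f-g\|_\alpha$ with $c<1$, so Banach's fixed point theorem produces a fixed point, which coincides with the FIF $h$ by uniqueness of the fixed point in $C(J,\mathbb{R}^M)$; hence $h\in V_\alpha$.

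The step I expect to be the main obstacle is precisely the coarse scales $n<\max_{k\in T}r_k$, where Lemma \ref{RFD} is unavailable because a dyadic interval of length $2^{-n}$ can straddle one or more of the nodes $x_k$. One can use that $Sf-Sg$ is continuous and vanishes at every $x_k$ (a consequence of $f,g$ sharing endpoint values), so its oscillation over such an $O$ is bounded by the oscillations over the finitely many pieces $O\cap J_k$, each again of the form $|\alpha_k|R_{f-g}(P_k^{-1}(O\cap J_k))$. Since there are only finitely many such scales and finitely many straddling intervals at each, these are a finite correction; the cleanest way to make this rigorous, and to sidestep the coarse-scale difficulty altogether, is to bound the seminorm of $h$ directly from the self-referential equation $h(t)=\alpha_k h(P_k^{-1}(t))+q_k(P_k^{-1}(t))$. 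This gives the recursion $B_n\leq\big(\sum_{k\in T}|\alpha_k|2^{-r_k(\alpha-1)}\big)\max_{m<n}B_m+\sum_{k\in T}\|q_k\|_\alpha$ for $B_n:=2^{-n(\alpha-1)}\sum_{|O|=2^{-n}}R_h(O)$, whose finitely many finite base values $B_0,\dots,B_{\max_k r_k-1}$ (finite since $h$ is continuous) then force $\sup_n B_n<\infty$ because the recursion ratio is strictly below $1$. This establishes $[h]_\alpha<\infty$ and hence $h\in V_\alpha(J,\mathbb{R}^M)$.
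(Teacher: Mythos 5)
Your proposal follows essentially the same route as the paper: the RB operator $S$ on the complete subspace $V_\alpha^0$, the oscillation/rescaling estimate via Lemma \ref{RFD} yielding the contraction factor $\sum_{k\in T}|\alpha_k|2^{-r_k(\alpha-1)}\le\sum_{k\in T}|\alpha_k|<1$, and the Banach fixed point theorem. If anything you are more careful than the published proof, which applies Lemma \ref{RFD} uniformly in $n$ and silently passes over the coarse scales $n<\max_{k\in T}r_k$; your explicit treatment of those finitely many scales (and the fallback recursion bounding $[h]_\alpha$ directly from the self-referential equation) patches a point the paper glosses over.
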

	\begin{proof}
		Let $V_\alpha^0(J)=\{f\in V_\alpha(J,\mathbb{R}^M): f(x_1)=\boldsymbol{y}_1 ~~\text{and}~~f(x_N)=\boldsymbol{y}_N\}.$ From the definition of $V_\alpha^0(J),$ it is clear that $V_\alpha^0(J)$ is a closed subset of $V_\alpha(J,\mathbb{R}^M).$ Therefore,  $V_\alpha^0(J)$ is a complete metric space with respect to metric induced by norm $\|.\|_\alpha.$ We define RB operator $S: V_\alpha^0(J)\to V_\alpha^0(J)$ by 
		$$(Sf)(t)=\alpha_k f(P_k^{-1}(t))+ ~q_k(P_k^{-1}(t))  $$
		$\forall~~ t \in J_k $ and $k \in T.$ In the light of the assumptions taken, it is clear that $S$ is well-defined. Let $f,g\in V_\alpha^0(J).$ By applying lemma \ref{RFD}, we have 
		\begin{align*}
			\|Sf-Sg\|_\alpha&= \|Sf-Sg\|_{\infty}+\sup\limits_{n\in \mathbb{N}}\frac{\sum_{|O|=2^{-n}}R_{(Sf-Sg)}{(O)}}{2^{n{(\alpha-1)}}}\\
			&\leq \alpha_{\max} \|f-g\|_{\infty}+ \sum\limits_{k\in T}|\alpha_k| \sup\limits_{n\in \mathbb{N}}\frac{\sum_{|O|=2^{-n}}R_{(f-g)}{(O)}}{2^{n{(\alpha-1)}}}\\
			&\leq \sum\limits_{k\in T}|\alpha_k|\bigg(\|f-g\|_{\infty}+\sup\limits_{n\in \mathbb{N}}\frac{\sum_{|O|=2^{-n}}R_{(f-g)}{(O)}}{2^{n{(\alpha-1)}}}\bigg)\\
			&= \sum\limits_{k\in T}|\alpha_k| \|f-g\|_{\alpha}.
		\end{align*}
		Since $\sum\limits_{k\in T}|\alpha_k|<1$, $S$ is a contraction map on  $V_\alpha^0(J).$  Applying the Banach contraction mapping theorem, operator $S$ has a unique fixed point $h$ in $V_\alpha^0(J).$ Furthermore, the function  $h$ satisfies self-referential equation, that is, 
		$$h(t)=\alpha_k h(P_k^{-1}(t))+ ~q_k(P_k^{-1}(t)),  $$
		$\forall~~ t \in J_k $ and $k \in T.$ This completes the proof.
		
	\end{proof}
	
	\begin{remark}
		Let us consider functions $\textbf{f},\textbf{g}:[0,1] \to \mathbb{R}^2$ such that $\textbf{f}=(h_1,0),\textbf{g}=(0,h_2)$, where $h_1$ and $h_2$ are coordinate functions of the Peano space filling curve. It is simple to deduce that ( see \cite{Kono}) $$\overline{\dim}_B\big(G({\textbf{f}})\big)=\overline{\dim}_B\big(G({h_1})\big) = 1.5~ \text{and} ~\overline{\dim}_B\big(G({\textbf{g}})\big)=\overline{\dim}_B\big(G({h_2})\big) = 1.5,$$ and 
		$$\overline{\dim}_B\bigg(G(\textbf{f}+\textbf{g})\bigg)=\overline{\dim}_B\bigg(G(h_1,h_2)\bigg)=2.$$
		So, we conclude that the inequality 
		\[
		\overline{\dim}_B\big(G({\textbf{f}+\textbf{g}})\big) \le \max\{\overline{\dim}_B\big(G({\textbf{f}})\big),\overline{\dim}_B\big(G({\textbf{g}})\big)\}=1.5,
		\]
		does not hold for vector-valued functions in contrast to its real-valued setting \cite[Lemma 2.1]{Fal&Far}. It hints that the space $\{f \in \mathcal{C}([a,b], \mathbb{R}^M) :\overline{\dim}_B\big(G({\textbf{f}})\big)\le \beta\}$ will not be a vector space for $M \ge 2.$ Thus, the result of Falconer and Fraser \cite{Fal&Far} cannot be generalized to vector-valued continuous functions.
	\end{remark}
	Thus, due to the above remark, the next result is valid only for real-valued setting. In the next result, we determine an upper bound for the upper box dimension of the graph of real-valued fractal interpolation functions.  
	\begin{theorem}
		Let $q_k\in V_{\alpha+\frac{1}{n}}(J,\mathbb{R}) ~~\forall~~k\in T, n\in \mathbb{N}$ and let $|P_k(J)|=\frac{1}{2^{r_k}}$ for some $r_k\in \mathbb{N}$ with $\sum\limits_{k\in T}\frac{1}{2^{r_k}}=1.$ If $\sum\limits_{k\in T}|\alpha_k|<1,$ then $\overline{\dim}_B(G(h))\leq \alpha.$
	\end{theorem}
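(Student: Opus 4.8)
The plan is to combine the space-embedding result already proved in Theorem \ref{hinV} with the elementary covering estimate relating the oscillation sums governing the $V_\beta$-norm to the box-counting number, and then to pass to a limit in the index $n$. First I would fix $n \in \mathbb{N}$ and invoke Theorem \ref{hinV} with the exponent $\alpha+\tfrac1n$ in place of $\alpha$. The hypotheses are exactly met: each $q_k$ lies in $V_{\alpha+\frac1n}(J,\mathbb{R})$, the partition condition $|P_k(J)|=2^{-r_k}$ with $\sum_{k\in T}2^{-r_k}=1$ holds, and $\sum_{k\in T}|\alpha_k|<1$. Hence the fractal function satisfies $h \in V_{\alpha+\frac1n}(J,\mathbb{R})$ for \emph{every} $n\in\mathbb{N}$.

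The central step is the auxiliary estimate that membership $h\in V_{\beta}(J,\mathbb{R})$, for any $\beta\ge 1$, forces $\overline{\dim}_B(G(h))\le \beta$. I would count boxes only along the dyadic scale $\delta=2^{-m}$. Partitioning $J$ into the $2^m$ dyadic intervals $O$ of length $2^{-m}$, the portion of the graph over $O$ lies in a rectangle of width $2^{-m}$ and height $R_h(O)$, so it is covered by at most $R_h(O)\,2^{m}+2$ squares of side $2^{-m}$. Summing over the intervals gives
\begin{equation*}
N_{2^{-m}}(G(h)) \le 2^{m}\sum_{|O|=2^{-m}} R_h(O) + 2\cdot 2^{m}.
\end{equation*}
By the very definition of $\|\cdot\|_{\beta}$ one has $\sum_{|O|=2^{-m}} R_h(O) \le \|h\|_{\beta}\,2^{m(\beta-1)}$, and since $\beta\ge 1$ the leading power dominates, yielding $N_{2^{-m}}(G(h)) \le (\|h\|_{\beta}+2)\,2^{m\beta}$.

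Taking logarithms and dividing by $m\log 2$ shows $\tfrac{\log N_{2^{-m}}(G(h))}{m\log 2}\to \beta$ along the dyadic scale. Because $\delta_m=2^{-m}$ satisfies $\delta_{m+1}/\delta_m=\tfrac12$, the standard reduction permitting one to replace the limit $\delta\to 0$ in the definition of upper box dimension by any such geometric sequence (cf. Falconer \cite{Fal}) gives $\overline{\dim}_B(G(h))\le \beta$. Applying this with $\beta=\alpha+\tfrac1n$ produces $\overline{\dim}_B(G(h))\le \alpha+\tfrac1n$ for every $n$, and letting $n\to\infty$ gives $\overline{\dim}_B(G(h))\le\alpha$, as claimed.

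I expect the main obstacle to lie precisely in the covering estimate of the second paragraph, and in being honest about why it is confined to the scalar range. For a real-valued $h$ the graph over $O$ is a single vertical column of $\sim R_h(O)/\delta$ squares, which is controlled linearly by the oscillation sum $\sum_O R_h(O)$ bounded by $\|h\|_\beta$. For a vector-valued $h\colon J\to\mathbb{R}^M$ with $M\ge 2$ one would instead need $\sim (R_h(O)/\delta)^M$ cubes, and $\sum_O R_h(O)^M$ is \emph{not} controlled by the $V_\beta$-norm; this is exactly the phenomenon flagged in the preceding remark, and it is the reason the statement is restricted to $\mathbb{R}$. The remaining ingredients, namely the application of Theorem \ref{hinV} and the limit $n\to\infty$, are routine.
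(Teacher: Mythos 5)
Your proposal is correct, and its skeleton coincides with the paper's: both arguments first invoke Theorem \ref{hinV} with exponent $\alpha+\frac{1}{n}$ to place $h$ in $V_{\alpha+\frac{1}{n}}(J,\mathbb{R})$ for every $n\in\mathbb{N}$, and then let $n\to\infty$. The difference is in the middle step. The paper disposes of it by citing \cite[Proposition 3.4]{Fal&Far}, which asserts the identity $\{f\in \mathcal{C}(J,\mathbb{R}):\overline{\dim}_B(G(f))\leq \alpha\}=\bigcap_{n\in\mathbb{N}}V_{\alpha+\frac{1}{n}}(J,\mathbb{R})$, whereas you prove directly the one inclusion actually needed, namely that $h\in V_{\beta}(J,\mathbb{R})$ with $\beta\geq 1$ forces $\overline{\dim}_B(G(h))\leq\beta$, via the oscillation-based box count at dyadic scales. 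That estimate is sound: the bound $N_{2^{-m}}(G(h))\leq 2^{m}\sum_{|O|=2^{-m}}R_h(O)+2\cdot 2^{m}\leq (\|h\|_{\beta}+2)\,2^{m\beta}$ follows from the definition of $\|\cdot\|_{\beta}$ (the supremum over $n$ gives the inequality at every dyadic level), and computing the upper box dimension along the geometric sequence $\delta_m=2^{-m}$ is a standard legitimate reduction. What your route buys is self-containedness, since only the elementary inclusion of the Falconer--Fraser characterization is used rather than the full equality; what the citation buys is brevity. Your closing observation about why the covering count is confined to the scalar case is apt and consistent with the remark preceding the theorem, which explains why the result does not extend to $M\geq 2$.
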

	\begin{proof}
		In Theorem \ref{hinV}, if we take $N=1$ and replace $\alpha$ by $\alpha+\frac{1}{n}$, then we get $h\in V_{\alpha+\frac{1}{n}}(J,\mathbb{R})$ for all $n\in \mathbb{N}.$ By \cite[Proposition 3.4]{Fal&Far}, we have
		$$\{f\in \mathcal{C}(J,\mathbb{R}):\overline{\dim}_B(G(f))\leq \alpha\}=\bigcap\limits_{n\in \mathbb{N}}V_{\alpha+\frac{1}{n}}(J,\mathbb{R}).$$ 
		Therefore, from the above equality, we can deduce that $\overline{\dim}_B(G(h))\leq \alpha.$ This completes the proof.
	\end{proof}
	\section{Riemann-Liouville fractional integral}\label{sc-4}
	In this Section, first we define  the Riemann-Liouville fractional integral of a vector-valued function.
	\begin{definition}
		Let $\textbf{f}$ be a vector-valued integrable function on a closed interval $[a,b].$ The Riemann-Liouville fractional integral of $\textbf{f}$ is defined as $$ _a \mathfrak{I}^{\beta}\textbf{f}(t)=\Big(~~_a \mathfrak{I}^{\beta}f_1(t),~_a \mathfrak{I}^{\beta}f_2(t),\dots,~~ _a \mathfrak{I}^{\beta}f_M(t)\Big),$$ 
		where
		$$ _a \mathfrak{I}^{\beta}f_i(t)=\frac{1}{\Gamma (\beta)} \int_a ^t (t-\eta)^{\beta-1} f_i(\eta)~\mathrm{d}\eta,~~\text{for}~~ i=1,2, \dots,M,$$ and $\beta>0$.
	\end{definition}
	The following theorem can be deduced from Liang \cite{Liang1}.
	\begin{theorem} \label{MT1}
		For $0 < a <b < \infty$ and $0<\beta<1 .$
		If $\textbf{f} \in \mathcal{C}([a,b],\mathbb{R}^M)\cap \mathcal{BV}([a,b],\mathbb{R}^M)$, then $_a\mathfrak{I}^{\beta}\textbf{f} \in \mathcal{C}([a,b],\mathbb{R}^M)\cap \mathcal{BV}([a,b],\mathbb{R}^M).$
	\end{theorem}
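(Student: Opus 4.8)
The plan is to reduce the vector-valued statement to the scalar case already handled by Liang, exploiting the fact that the Riemann--Liouville fractional integral acts componentwise. Writing $\textbf{f}=(f_1,\dots,f_M)$, by definition $_a\mathfrak{I}^\beta\textbf{f}=\big(\,_a\mathfrak{I}^\beta f_1,\dots,\,_a\mathfrak{I}^\beta f_M\big)$, so it suffices to show that each scalar integral $_a\mathfrak{I}^\beta f_i$ lies in $\mathcal{C}([a,b],\mathbb{R})\cap\mathcal{BV}([a,b],\mathbb{R})$ and then reassemble. The whole argument therefore rests on the elementary principle that membership in $\mathcal{C}([a,b],\mathbb{R}^M)\cap\mathcal{BV}([a,b],\mathbb{R}^M)$ is equivalent to each coordinate function belonging to $\mathcal{C}([a,b],\mathbb{R})\cap\mathcal{BV}([a,b],\mathbb{R})$.

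First I would record this coordinate equivalence. Continuity is immediate, since a map into $\mathbb{R}^M$ is continuous precisely when each of its components is. For bounded variation I would use the two-sided estimate
$$|f_i(s)-f_i(t)|\;\le\;\|\textbf{f}(s)-\textbf{f}(t)\|\;\le\;\sum_{j=1}^{M}|f_j(s)-f_j(t)|,$$
valid for all $s,t\in[a,b]$. Summing the left inequality over the consecutive nodes of an arbitrary partition $\Delta=(t_0,\dots,t_l)$ gives $V(f_i,[a,b])\le V(\textbf{f},[a,b])$, while summing the right inequality gives $V(\textbf{f},[a,b])\le\sum_{j=1}^{M}V(f_j,[a,b])$. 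Hence $\textbf{f}\in\mathcal{BV}([a,b],\mathbb{R}^M)$ if and only if $f_i\in\mathcal{BV}([a,b],\mathbb{R})$ for every $i$, and the same chain applied to $_a\mathfrak{I}^\beta\textbf{f}$ lets me pass back from the scalar conclusions to the vector conclusion.

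With the reduction in hand, the hypothesis $\textbf{f}\in\mathcal{C}([a,b],\mathbb{R}^M)\cap\mathcal{BV}([a,b],\mathbb{R}^M)$ forces each $f_i$ to be a real-valued continuous function of bounded variation on $[a,b]$. Since $0<a<b<\infty$ and $0<\beta<1$, I would invoke Liang's scalar result \cite{Liang1}, which yields $_a\mathfrak{I}^\beta f_i\in\mathcal{C}([a,b],\mathbb{R})\cap\mathcal{BV}([a,b],\mathbb{R})$ for each $i\in\{1,\dots,M\}$. Applying the coordinate equivalence of the previous paragraph in the reverse direction to the vector $\big(\,_a\mathfrak{I}^\beta f_1,\dots,\,_a\mathfrak{I}^\beta f_M\big)=\,_a\mathfrak{I}^\beta\textbf{f}$ then gives $_a\mathfrak{I}^\beta\textbf{f}\in\mathcal{C}([a,b],\mathbb{R}^M)\cap\mathcal{BV}([a,b],\mathbb{R}^M)$, completing the proof.

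I do not anticipate a genuine obstacle here: the only substantive analytic content—the integrability of the singular kernel $(t-\eta)^{\beta-1}$ for $0<\beta<1$, the continuity of the resulting integral, and the finiteness of its total variation—is precisely what Liang establishes in the scalar setting, so the entire difficulty is outsourced to \cite{Liang1}. The remaining work is purely the bookkeeping of the componentwise passage, and the main point to state carefully is the bounded-variation equivalence above, ensuring both directions of the coordinate comparison are made explicit so that the scalar theorem can be applied to the components and the vector conclusion recovered from them.
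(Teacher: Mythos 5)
Your proof is correct and follows exactly the route the paper intends: the paper gives no proof of this theorem, stating only that it ``can be deduced from Liang \cite{Liang1}'', and your componentwise reduction together with the two-sided variation estimate is precisely that deduction made explicit. Nothing is missing; the coordinate equivalence for $\mathcal{BV}$ and the appeal to the scalar result are both sound.
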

	There are several works on fractal dimension of fractional integral of a real-valued continuous functions, see, for instance, \cite{SS,Liang5,Liang4,Liang EST}. Let us note the following inequality:
	$$ \|\textbf{f}(x) -\textbf{f}(y)\|_2 \le \sqrt{M} \max_{1 \le i \le M}| f_i(x) -f_i(y)| \le \sqrt{M} L_{\textbf{f}} |x-y|^\sigma,$$ 
	where $f_i$ denotes the $i$-th coordinate function of the vector-valued function $\textbf{f}, $  $L_{\textbf{f}}=\max\{L_{f_1},\\L_{f_2}, \dots, L_{f_M}\}$ and $L_{f_i}$ is the H\"older constant of $f_i$. Using \cite{Liang6} and the above inequality, we can immediately obtain the next remark. Hence, we omit the proof.
	\begin{remark}
		Let $\textbf{f}:[a,b]\to \mathbb{R}^M$ be a continuous function.
		\begin{enumerate}
			\item  If $ 0< \beta < 1,$ then $$ 1 \le  \dim_H(\text{G}(_a\mathfrak{I}^{\beta}f_i)) \le \overline{\dim}_B(\text{G}(_a\mathfrak{I}^{\beta}f_i)) \le 2- \beta.$$
			\item If $  \beta \ge 1,$ then $$ \dim_H(\text{G}(_a\mathfrak{I}^{\beta}\textbf{f})) = \dim_B(\text{G}(_a\mathfrak{I}^{\beta}\textbf{f}))=1 .$$
		\end{enumerate}
	\end{remark}
	
	In the following theorem, we prove that the Riemann-Liouville fractional integral of a vector-valued FIF is again a  vector-valued FIF corresponding to different data set and show the existence of a unique Borel probability measure supported on the graph of ${_{x_1} \mathfrak{I}^{\beta}h}.$
	
	\begin{theorem}
		Let $h$ be the FIF determined by the IFS mentioned before in Section \ref{se2}. Then $ _{x_1} \mathfrak{I}^{\beta}h$ is the FIF associated to the the IFS
		$\mathcal{J}_1=\{J\times \mathbb{R}^M: H_1,H_2,\dots,H_{N-1}\}$ with data set $\{(x_i,~~{_{x_1} \mathfrak{I}^{\beta}}h(x_i)): i\in{1,2,\cdots,N} \},$  where $H_k(t,\boldsymbol{z})=(P_k(t),F'_k(t,\boldsymbol{z})) $ for $k\in T$ and $P_k(t)$ and $F'_k(t,\boldsymbol{z})$ are defined as follows
		$$P_k(t)=a_{k}t+d_k,~~~F'_k(t,\boldsymbol{z})= a_k^{\beta} \alpha_k \boldsymbol{z}+ Q_k(t),$$  where $a_k,\alpha_k,d_k$ are defined as above, $Q_k(t):= (Q_{k,1}(t),Q_{k,2}(t), \dots,Q_{k,M}(t))$ is vector-valued continuous function on $J$ and for each $i\in \{1,2,\cdots,M\}$
		$$Q_{k,i}(t)= \frac{1}{\Gamma (\beta)} \int_{x_1} ^{P_k(x_1)} (P_k(t)-\eta)^{\beta-1} h_i(\eta)~\mathrm{d}\eta + \frac{a_k^{\beta} }{\Gamma (\beta)} \int_{x_1} ^{t} (t-\eta)^{\beta-1} q_{k,i}(\eta)~\mathrm{d}\eta.$$ Furthermore, let $(p_1,p_2,\cdots,p_{N-1})$ be a probability vector corresponding to the IFS $\mathcal{J}_1.$ Then there exist a unique Borel probability measure $\mu^*$ supported on the graph $G({_{x_1} \mathfrak{I}^{\beta}h})$ such that $$\mu^*=\sum_{k \in T}p_k\mu^* \circ H_k^{-1}. $$
	\end{theorem}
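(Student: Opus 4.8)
The plan is to verify directly that $g := {_{x_1}\mathfrak{I}^{\beta}h}$ satisfies the self-referential functional equation associated with the IFS $\mathcal{J}_1$, namely
$$g(t) = a_k^{\beta}\alpha_k\, g(P_k^{-1}(t)) + Q_k(P_k^{-1}(t)), \quad t \in J_k,\ k \in T,$$
and then to invoke the measure construction already established in Theorem \ref{th3.9}. It suffices to argue componentwise, so fix $i \in \{1,\dots,M\}$ and $k \in T$, and for $s \in J$ write $t = P_k(s) = a_k s + d_k \in J_k$. First I would split the defining integral of the $i$th component $g_i(P_k(s))$ at the point $x_k = P_k(x_1)$:
$$g_i(P_k(s)) = \frac{1}{\Gamma(\beta)}\int_{x_1}^{P_k(x_1)} (P_k(s)-\eta)^{\beta-1} h_i(\eta)\,\mathrm{d}\eta + \frac{1}{\Gamma(\beta)}\int_{P_k(x_1)}^{P_k(s)} (P_k(s)-\eta)^{\beta-1} h_i(\eta)\,\mathrm{d}\eta,$$
where the first summand is already exactly the first term of $Q_{k,i}(s)$.

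The core computation is the second integral. On it I would perform the change of variable $\eta = P_k(\xi)$, using $P_k(s) - P_k(\xi) = a_k(s-\xi)$ and $\mathrm{d}\eta = a_k\,\mathrm{d}\xi$ (here $a_k = (x_{k+1}-x_k)/(x_N-x_1) > 0$, so $a_k^{\beta}$ is well defined and the possibly singular factor $(s-\xi)^{\beta-1}$ remains integrable), which turns it into $\frac{a_k^{\beta}}{\Gamma(\beta)}\int_{x_1}^{s}(s-\xi)^{\beta-1} h_i(P_k(\xi))\,\mathrm{d}\xi$. Substituting the self-referential equation for $h$, namely $h_i(P_k(\xi)) = \alpha_k h_i(\xi) + q_{k,i}(\xi)$, splits this into $a_k^{\beta}\alpha_k\, g_i(s)$ plus precisely the second term of $Q_{k,i}(s)$. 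Recombining the two summands yields $g_i(P_k(s)) = a_k^{\beta}\alpha_k\, g_i(s) + Q_{k,i}(s)$, which is the desired functional equation.

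Once the functional equation holds, the interpolation and join conditions come for free: evaluating it at $s = x_1$ and $s = x_N$ gives $F'_k(x_1,\boldsymbol{y}'_1) = \boldsymbol{y}'_k$ and $F'_k(x_N,\boldsymbol{y}'_N) = \boldsymbol{y}'_{k+1}$, where $\boldsymbol{y}'_j := {_{x_1}\mathfrak{I}^{\beta}h}(x_j)$, so the endpoints match the prescribed data set and consecutive pieces agree at the knots. Since $h$ is continuous on the compact interval $J$ (hence bounded), $g$ is continuous, and each $Q_k$ is continuous by construction; together with $|a_k^{\beta}\alpha_k| = |a_k|^{\beta}|\alpha_k| < 1$ this confirms that $\mathcal{J}_1$ is a genuine FIF-type IFS whose attractor is $G(g)$, so $g = {_{x_1}\mathfrak{I}^{\beta}h}$ is the FIF it determines.

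For the final assertion, each $H_k(t,\boldsymbol{z}) = (P_k(t), a_k^{\beta}\alpha_k\boldsymbol{z} + Q_k(t))$ has exactly the structural form of the maps $W_k$ treated earlier, with scaling factor $a_k^{\beta}\alpha_k$ (of modulus less than one) and continuous translation part $Q_k$. Hence the argument of Theorem \ref{W_K} applies verbatim to show each $H_k$ is a contraction with respect to the metric $d_*$ built from $g$, and Theorem \ref{th3.9} then yields a unique Borel probability measure $\mu^*$ with $\mu^* = \sum_{k\in T} p_k\,\mu^* \circ H_k^{-1}$ supported on $G({_{x_1}\mathfrak{I}^{\beta}h})$. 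The only genuinely delicate point is the change-of-variable bookkeeping in the second integral; everything else is formal once that identity is in place.
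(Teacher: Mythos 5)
Your proposal is correct and follows essentially the same route as the paper: split the fractional integral of $h_i\circ P_k$ at $P_k(x_1)$, substitute $\eta=P_k(\xi)$ to extract the factor $a_k^{\beta}$, insert the self-referential equation $h_i(P_k(\xi))=\alpha_k h_i(\xi)+q_{k,i}(\xi)$ to obtain $ _{x_1}\mathfrak{I}^{\beta}h_i(P_k(t))=a_k^{\beta}\alpha_k\, _{x_1}\mathfrak{I}^{\beta}h_i(t)+Q_{k,i}(t)$, and then transfer the contraction and invariant-measure arguments of Theorems \ref{W_K} and \ref{th3.9} to the maps $H_k$ with the metric built from $ _{x_1}\mathfrak{I}^{\beta}h$. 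Your added remarks on the endpoint/join conditions and the integrability of the singular kernel are harmless refinements of the same argument.
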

	\begin{proof}
		Let us first recall the functional equation satisfied by the FIF $h$:
		$$h\big(P_k(t) \big)= \alpha_k h (t)+ q_k (t), ~~\forall~t \in J,~ k \in T.$$
		Now, for coordinate function $h_i$ of $h,$ we have
		\begin{equation*}
			\begin{aligned}
				_{x_1} &\mathfrak{I}^{\beta}h_i\big(P_k(t) \big)\\& =\frac{1}{\Gamma (\beta)} \int_{x_1} ^{P_k(t)} (P_k(t)-\eta)^{\beta-1} h_i(\eta)~\mathrm{d}\eta\\ &= \frac{1}{\Gamma (\beta)} \int_{x_1} ^{P_k(x_1)} (P_k(t)-\eta)^{\beta-1} h_i(\eta)~\mathrm{d}\eta+ \frac{1}{\Gamma (\beta)} \int_{P_k(x_1)} ^{P_k(t)} (P_k(t)-\eta)^{\beta-1} h_i(\eta)~\mathrm{d}\eta \\ &= \frac{1}{\Gamma (\beta)} \int_{x_1} ^{P_k(x_1)} (P_k(t)-\eta)^{\beta-1} h_i(\eta)~\mathrm{d}\eta+ \frac{a_k}{\Gamma (\beta)} \int_{x_1} ^{t} (P_k(t)-P_k(\eta))^{\beta-1} h_i(P_k(\eta))~\mathrm{d}\eta \\ &= \frac{1}{\Gamma (\beta)} \int_{x_1} ^{P_k(x_1)} (P_k(t)-\eta)^{\beta-1} h_i(\eta)~\mathrm{d}\eta+ \frac{a_k^{\beta} \alpha_k}{\Gamma (\beta)} \int_{x_1} ^{t} (t-\eta)^{\beta-1} h_i(\eta)~\mathrm{d}\eta\\& + \frac{a_k^{\beta} }{\Gamma (\beta)} \int_{x_1} ^{t} (t-\eta)^{\beta-1} q_{k,i}(\eta)~\mathrm{d}\eta \\ &= a_k^{\beta} \alpha_k~ _{x_1} \mathfrak{I}^{\beta}h_i(t)+ \frac{1}{\Gamma (\beta)} \int_{x_1} ^{P_k(x_1)} (P_k(t)-\eta)^{\beta-1} h_i(\eta)~\mathrm{d}\eta \\&+ \frac{a_k^{\beta} }{\Gamma (\beta)} \int_{x_1} ^{t} (t-\eta)^{\beta-1} q_{k,i}(\eta)~\mathrm{d}\eta \\ &= a_k^{\beta} \alpha_k~ _{x_1} \mathfrak{I}^{\beta}h_i(t)+ Q_{k,i}(t),
			\end{aligned}
		\end{equation*}
		where $Q_{k,i}(t)= \frac{1}{\Gamma (\beta)} \int_{x_1} ^{P_k(x_1)} (P_k(t)-\eta)^{\beta-1} h_i(\eta)~\mathrm{d}\eta + \frac{a_k^{\beta} }{\Gamma (\beta)} \int_{x_1} ^{t} (t-\eta)^{\beta-1} q_{k,i}(\eta)~\mathrm{d}\eta.$
		In the third line we use $\eta=P_k(t)$ and $P_k(t)=a_k t+d_k.$ Fourth line follows by the functional equation: $h_i\big(P_k(t) \big)= \alpha_k h_i (t)+ q_{k,i} (t)$ and $P_k(t)-P_k(\eta)=a_k (t-\eta).$ Set $Q_k(t):= (Q_{k,1}(t),Q_{k,2}(t), \dots,Q_{k,M}(t)).$ Then, from the above equation of $_{x_1} \mathfrak{I}^{\beta}h_i\big(P_k(t) \big),$ we may write
		$$_{x_1} \mathfrak{I}^{\beta}h\big(P_k(t) \big) = a_k^{\beta} \alpha_k~ _{x_1} \mathfrak{I}^{\beta}h(t)+ Q_{k}(t).$$
		This shows that the Riemann-Liouville fractional integral $_{x_1} \mathfrak{I}^{\beta}h$ of the FIF $h$ is again an FIF generated by the IFS
		$\mathcal{J}_1=\{J\times \mathbb{R}^M;H_1,H_2,\dots,H_{N-1}\}$ where $H_k(t,\boldsymbol{z})=(P_k(t),F'_k(t,\boldsymbol{z})) $ for $k\in T$ and $P_k(t)$ and $F'_k(t,\boldsymbol{z})$ are defined as follows
		$$P_k(t)=a_{k}t+d_k,$$
		$$F'_k(t,\boldsymbol{z})= a_k^{\beta} \alpha_k \boldsymbol{z}+ Q_k(t).$$

		In order to show the other part of the theorem, we first define a metric $d_0$ on $J\times \mathbb{R}^M$ as follows :
		$$d_0((t_1,\boldsymbol{z_1}),(t_2,\boldsymbol{z_2}))= |t_1-t_2|+\|(\boldsymbol{z_1}- {_{x_1} \mathfrak{I}^{\beta}h(t_1)})-(\boldsymbol{z_2}-{_{x_1} \mathfrak{I}^{\beta}h(t_2)})\|$$ for all $(t_1,\boldsymbol{z_1}),(t_2,\boldsymbol{z_2})\in J\times \mathbb{R}^M.$
		Then, it is easy to show that $\big( J\times \mathbb{R}^M,d_0 \big)$ is a complete metric space.
		Following Theorem \ref{W_K}, the map $H_k: J\times \mathbb{R}^M \to J\times \mathbb{R}^M $ is a contraction map with contraction ratio $s_k=\max\{|a_k|,|a_k^{\beta} \alpha_k|\} $ with respect to metric $d_0$ on   $J\times \mathbb{R}^M.$ Now, on similar lines of the proof of Theorem \ref{th3.9}, one can easily prove the existence of a unique measure supported on the graph $G({_{x_1} \mathfrak{I}^{\beta}h})$ such that $$\mu^*=\sum_{k \in T}p_k\mu^* \circ H_k^{-1}. $$ 
		Thus, the proof is complete.
	\end{proof}
	Next, we give an example, which ensures that it is not generally true that $\mu_*$ is absolutely continuous with respect to $\mu^*$ or $\mu^*$ is absolutely continuous with respect to $\mu_*.$  
	\begin{example}
		Let $f:[0,1]\to \mathbb{R}$ be such that $f(t)=1~~\forall~~ t\in [0,1].$ Let ${_{0} \mathfrak{I}^{\beta}f}$ be the Riemann-Liouville fractional integral of $f$. Then \begin{align*}
			{_{0} \mathfrak{I}^{\beta}f(t)}=\frac{1}{\Gamma (\beta)} \int_o ^t (t-\eta)^{\beta-1} f(\eta)~\mathrm{d}\eta=\frac{t^\beta}{\Gamma{(\beta+1)}}.
		\end{align*} 
		For $\beta=1,~~{_{0} \mathfrak{I}^{1}f(t)}=t$. Let $\mu_1$ and $\mu_2$ be the Borel probability measure supported on the graphs of $f$ and ${_{0} \mathfrak{I}^{1}f}$,  respectively. Therefore, $\mu_1(G(f))=\mu_2(G({_{0} \mathfrak{I}^{1}f}))=1$  and $\mu_1(G({_{0} \mathfrak{I}^{1}f}))=\mu_2(G(f))=0$. From this, it is clear that $\mu_1$ is not absolutely continuous with respect to $\mu_2$ and also $\mu_2$ is not absolutely continuous with respect to $\mu_1$. 
		
	\end{example}
	\par
	In the next theorem, we estimate the Hausdorff dimension and the  box-counting dimension of the graph of the Riemann-Liouville fractional integral of a vector-valued FIF and without any assumption, we also determine an upper bound of the Hausdorff dimension of invariant measures supported on the graph of  ${_{x_1} \mathfrak{I}^{\beta}h}.$
	\begin{theorem}
		Let $\mathcal{I}:=\{J\times \mathbb{R}^M;~~W_i :i\in T\}$ be the IFS as defined earlier. Set $\alpha_{\max}=\{|\alpha_k| : k\in T\}$ and $a_{\min}:= \min\{|a_k|: k \in T \}$. 
		\begin{enumerate}
			
			\item If $q_k \in \mathcal{BV}(J,\mathbb{R}^M)~~\text{for all}~~k\in T$ and $\alpha_{\max}< \frac{1}{(N-1)}$, then for $0<\beta<1,$ $_{x_1}\mathfrak{I}^{\beta}{h} \in \mathcal{C}(J,\mathbb{R}^M)\cap \mathcal{BV}(J,\mathbb{R}^M)$. Moreover,
			$$ \dim_H(\text{G}(_{x_1}\mathfrak{I}^{\beta}{h})) = \dim_B(\text{G}(_{x_1}\mathfrak{I}^{\beta}{h}))=1 .$$
			\item Let $ q_k \in  \mathcal{HC}^{\sigma}(J )$ for each $k \in T ,$ where $ \sigma\in (0,1)$ and $  \frac{\alpha_{\max}}{a_{\min}^\sigma}< 1 $. \begin{itemize}
				\item[(i)] If $0<\beta<1$ and $\beta+\sigma\leq 1$, then $_{x_1}\mathfrak{I}^{\beta}h\in \mathcal{HC}^{\beta+\sigma}(J )$. Furthermore,
				$$ 1 \le  \dim_H(\text{G}(_{x_1}\mathfrak{I}^{\beta}h_i)) \le \overline{\dim}_B(\text{G}(_{x_1}\mathfrak{I}^{\beta}h_i)) \le 2- \beta-\sigma~~~~~\forall~~i\in \{1,2,\cdots,M\}.$$
				\item[(ii)] If $\beta+\sigma>1$, then $_{x_1}\mathfrak{I}^{\beta}h$ is differentiable on $J$ and
				$$ \dim_H(\text{G}(_{x_1}\mathfrak{I}^{\beta}{h})) = \dim_B(\text{G}(_{x_1}\mathfrak{I}^{\beta}{h}))=1 .$$ Moreover, 
				$ \frac{d}{dt}{_{x_1}\mathfrak{I}^{\beta}}h(t)\in \mathcal{HC}^{\beta+\sigma-1}(J)$ and $$ 1 \le  \dim_H\bigg(\text{G}\bigg(\frac{d}{dt}{_{x_1}\mathfrak{I}^{\beta}h_i}\bigg)\bigg) \le \overline{\dim}_B\bigg(\text{G}\bigg(\frac{d}{dt}{ _{x_1}\mathfrak{I}^{\beta}h_i}\bigg)\bigg) \le 3- \beta-\sigma~~~~~\forall~~i\in \{1,2,\cdots,M\}.$$
			\end{itemize}
			\item $$\dim_H(\mu^*)\leq \frac{\sum\limits_{k\in T}p_k\log{p_k}}{\sum\limits_{k\in T}p_k\log{s_k}},$$ where $\mu^*$ is an invariant measure corresponding to the IFS $\mathcal{J}_1$ and support of $\mu^*$ is the graph $\text{G}(_{x_1}\mathfrak{I}^{\beta}{h}).$
		\end{enumerate}
	\end{theorem}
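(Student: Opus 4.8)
The plan is to handle the three parts in turn, in each case first establishing the analytic regularity of $_{x_1}\mathfrak{I}^{\beta}h$ and then reading off the dimension from the appropriate regularity-to-dimension result recorded earlier. For part (1), I would first invoke Theorem \ref{th3.14}: since $q_k\in\mathcal{BV}(J,\mathbb{R}^M)$ and $\alpha_{\max}<\frac{1}{N-1}$, the FIF $h$ lies in $\mathcal{BV}(J,\mathbb{R}^M)$, and being a FIF it is continuous, so $h\in\mathcal{C}(J,\mathbb{R}^M)\cap\mathcal{BV}(J,\mathbb{R}^M)$. Applying Theorem \ref{MT1} (the vector-valued form of Liang's preservation result) with $0<\beta<1$ then gives $_{x_1}\mathfrak{I}^{\beta}h\in\mathcal{C}(J,\mathbb{R}^M)\cap\mathcal{BV}(J,\mathbb{R}^M)$. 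Finally, since a continuous function of bounded variation has a rectifiable graph, the same argument as in Theorem \ref{th3.14} (see also \cite{Liang1}) yields $\dim_H(G(_{x_1}\mathfrak{I}^{\beta}h))=\dim_B(G(_{x_1}\mathfrak{I}^{\beta}h))=1$.

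For part (2), I start from Theorem \ref{BBVL3}, which under the hypothesis $\frac{\alpha_{\max}}{a_{\min}^\sigma}<1$ gives $h\in\mathcal{HC}^{\sigma}(J)$, so that each coordinate $h_i\in\mathcal{HC}^{\sigma}(J)$. The crux is then the classical smoothing property of the Riemann--Liouville operator on Hölder classes. In case (i), when $\beta+\sigma\le 1$, the operator $_{x_1}\mathfrak{I}^{\beta}$ raises the Hölder exponent by $\beta$, so $_{x_1}\mathfrak{I}^{\beta}h\in\mathcal{HC}^{\beta+\sigma}(J)$; applying the box-dimension estimate for Hölder functions \cite{Fal} to each component $_{x_1}\mathfrak{I}^{\beta}h_i$ gives $\overline{\dim}_B(G(_{x_1}\mathfrak{I}^{\beta}h_i))\le 2-\beta-\sigma$, while the universal lower bound $\dim_H\ge 1$ for graphs of continuous functions on an interval closes the chain. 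In case (ii), when $\beta+\sigma>1$, the same smoothing pushes the exponent past $1$: $_{x_1}\mathfrak{I}^{\beta}h$ becomes continuously differentiable with $\frac{d}{dt}{_{x_1}\mathfrak{I}^{\beta}}h\in\mathcal{HC}^{\beta+\sigma-1}(J)$, where I would lean on the estimates of \cite{Liang6}. Differentiability forces $_{x_1}\mathfrak{I}^{\beta}h$ to be Lipschitz on the compact interval $J$, whence its graph has dimension $1$; applying the Hölder box-dimension estimate to each component of the derivative, whose exponent $\beta+\sigma-1$ lies in $(0,1)$, gives the bound $3-\beta-\sigma$.

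For part (3), the invariant measure $\mu^*$ is attached to the IFS $\mathcal{J}_1$, whose maps $H_k$ are contractions with ratio $s_k=\max\{|a_k|,|a_k^{\beta}\alpha_k|\}$ relative to the equivalent metric $d_0$, as established in the preceding theorem. Theorem \ref{dimM} then applies verbatim with $C_k$ replaced by $s_k$, yielding $\dim_H(\mu^*)\le\frac{\sum_{k\in T}p_k\log p_k}{\sum_{k\in T}p_k\log s_k}$; note that the Birkhoff-ergodic-theorem argument used there is insensitive to the choice among mutually equivalent metrics, so no extra work is needed in transferring the estimate from $d_*$ to $d_0$.

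The main obstacle I anticipate is the regularity transfer underlying part (2): establishing sharply that $_{x_1}\mathfrak{I}^{\beta}$ sends $\mathcal{HC}^{\sigma}(J)$ into $\mathcal{HC}^{\beta+\sigma}(J)$ when $\beta+\sigma\le 1$, and into $C^1$ with an $\mathcal{HC}^{\beta+\sigma-1}$ derivative when $\beta+\sigma>1$. This requires careful control of the singular kernel $(t-\eta)^{\beta-1}$ near the diagonal, splitting the increment $_{x_1}\mathfrak{I}^{\beta}h_i(t_1)-{_{x_1}\mathfrak{I}^{\beta}}h_i(t_2)$ into a near-endpoint contribution and a tail, and, in case (ii), justifying differentiation under the integral sign despite the non-differentiability of $h$. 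Once these Hölder and differentiability estimates are secured, every dimension conclusion follows immediately from the results already quoted.
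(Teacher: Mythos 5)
Your proposal is correct and follows essentially the same route as the paper: the paper's proof of this theorem is itself just the chain ``Theorem \ref{th3.14} plus Liang's bounded-variation preservation lemma'' for part (1), ``Theorem \ref{BBVL3} plus the H\"older-smoothing estimate for the Riemann--Liouville integral'' for part (2), and ``repeat the Birkhoff-ergodic argument of Theorem \ref{dimM} with $C_k$ replaced by $s_k$'' for part (3). You have merely made explicit the regularity-transfer steps that the paper delegates to the cited references, and your identification of the H\"older exponent shift $\sigma\mapsto\beta+\sigma$ as the only nontrivial analytic input matches where the paper leans on external results.
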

	\begin{proof}
		\begin{enumerate}
			\item  By using Theorem \ref{th3.14} and \cite[Lemma 2.2]{Liang1}, we get our required result.
			\item In the light of Theorem \ref{BBVL3} and \cite[Theorem 2.1]{Liang EST}, one can easily get this result.
			\item One can easily prove this part by following the proof of Theorem \ref{dimM}.
		\end{enumerate}
	\end{proof}
	
	\begin{remark}
		The previous result should be compared with Theorem $4$ of \cite{RSY}, wherein for a linear FIF $h$, which is determined by $$\{L_i(x), F_i(x, y)\}_{i=1}^{N-1}, ~~
		\text{where}~~ L_i(x) = a_i x +b_i
		~~\text{and}~~ F_i(x, y) = d_i y + q_i(x)$$ are such that $\sum\limits_{i=1}^{N-1} |d_i| >1 $ and $\dim_B(G( h) ) = D(\{a_i, d_i\})$, where $D(\{a_i, d_i\})$ is a unique number $ t $ satisfying the equation  $\sum\limits_{i=1}^{N-1} |d_i| a_i^{t-1}=1,$
		it is shown that 
		$$\dim_B(G(_{x_1}\mathfrak{I}^{\alpha}h)) = \dim_B(G( h)) - \alpha,$$ for any $0 < \alpha < D(\{a_i, d_i\}) - 1,$ provided  $\dim_B(G(_{x_1}\mathfrak{I}^{\alpha}q_i))=1$ for any $1\leq i\leq N-1$ and $\dim_B(G(_{x_1}\mathfrak{I}^{\alpha}h))>1.$ Here we  evaluate the Hausdorff and box dimension of the graph of the Riemann-Liouville fractional integral of FIFs by imposing less conditions as compared to \cite[Theorem 4]{RSY}. Let us summarize the above discussion as follows: Ruan \cite{RSY} has calculated the exact value of the box dimension of the graph of fractional integral of a linear FIF, however, we obtain an upper bound of the graph of fractional integral of a fractal function belonging to a more general class of fractal functions than linear FIFs.
	\end{remark}

	\section{STATEMENTS AND DECLARATIONS}
{\bf Data availability}
Data sharing not applicable to this article as no datasets were generated or analysed during the current study	
	
{\bf Funding:}
The authors declare that no funds, grants, or other support were received during the preparation of this manuscript.

{\bf Competing Interests:}
The authors have no relevant financial or non-financial interests to disclose.

{\bf Author Contributions:}
All authors contributed equally in this manuscript. 
	
	\bibliographystyle{amsplain}

\end{document}